\documentclass{amsart}

\theoremstyle{plain}
\newtheorem{theorem}{Theorem}[section]
\newtheorem{corollary}[theorem]{Corollary}
\newtheorem{proposition}[theorem]{Proposition}
\newtheorem{lemma}[theorem]{Lemma}
\newtheorem*{theorem*}{Theorem}

\theoremstyle{definition}
\newtheorem{definition}[theorem]{Definition}

\newtheorem{remark}[theorem]{Remark}

\DeclareMathOperator{\USp}{USp}
\DeclareMathOperator{\Sp}{Sp}

\usepackage[initials]{amsrefs}

\begin{document}

\title[The summatory function in function fields]{The summatory function of the M\"obius function in function fields}
\author{Byungchul Cha}
\address{Muhlenberg College, 2400 Chew st, Allentown, PA 18104}
\date{\today}

\subjclass{11N56 (primary), 11M50}

\begin{abstract}
We study the growth rate of the summatory function of the M\"obius function in the context of an algebraic curve over a finite field. Our work shows a strong resemblance to its number field counterpart, as described by Ng in 2004. We find an expression for a bound of the summatory function, which becomes sharp when the zeta zeros of the curve satisfy a certain linear independence property. Extending a result of Kowalski in 2008, we prove that most curves in the family of universal hyperelliptic curves satisfy this property. Then, we consider a certain geometric average of such bound in this family, using Katz and Sarnak's reformulation of the equidistribution theorem of Deligne. Lastly, we study an asymptotic behavior of this average as the family gets larger by evaluating the average values of powers of characteristic polynomials of random unitary symplectic matrices. 
\end{abstract}

\maketitle

\section{Introduction}

Recall that the M\"obius function $\mu(n)$ is defined for any positive integer $n$ by
\[ \mu(n) := \begin{cases} 1 & \text { if } n = 1, \\
0 & \text{ if  $n$ is not square free,} \\
(-1)^t & \text{ if $n$ is a product of $t$ distinct primes.}
\end{cases}\]
Let $M(x)$ be its summatory function
\[ M(x) := \sum_{n\le x} \mu(n).\]
Mertens's conjecture \cite{Mer97} states that the inequality
\begin{equation}\label{eq:MertensConjecture}
|M(x)| < \sqrt x
\end{equation}
holds for all $x>1$. This conjecture was disproved by Odlyzko and te Riele \cite{OtR85} in 1985. Still, understanding the growth of $M(x)$ remains as a subject of intensive investigation by many authors in analytic number theory. For example, see \cite{MM09} and \cite{Sou09} for some recent results. Relevant to us is a paper \cite{Ng04} of Ng, where he gives certain conditional results on the growth of $M(x)$, using the techniques of Rubinstein and Sarnak in \cite{RS94}. In particular, Ng presents a probabilistic argument supporting the conjecture of Gonek, which states that the magnitude of $M(x)/\sqrt x$ grows roughly as $(\log\log\log x)^{5/4}$. More precisely, it asserts that there exists a number $B>0$ such that 
\begin{equation}\label{eq:GonekConjecture}
\limsup_{x\to \infty} \frac{M(x)}{\sqrt x (\log \log\log x)^{5/4}} = B, \text{ and }
\liminf_{x\to \infty} \frac{M(x)}{\sqrt x (\log \log\log x)^{5/4}} = -B.
\end{equation}

In this paper, we try to construct a function field analog of Ng's work and examine several issues that arise from this attempt. 
This is motivated by the present author's earlier paper \cite{Cha08}, where a function field version of Rubinstein and Sarnak's work is established. 
To describe our results in more detail, we fix some notations first. Let $C$ be a nonsingular projective curve of genus $g$ defined over a finite field $\mathbb{F}_q$ of characteristic $p>2$ with $q$ elements. Define the M\"obius function $\mu_{C/\mathbb{F}_q}(D)$ of $C/\mathbb{F}_q$ for all effective divisors $D$ of $C$ in the obvious way,
\[ \mu_{C/\mathbb{F}_q}(D) := \begin{cases} 1 & \text { if } D=0, \\
0 & \text{ if a prime divisor divides $D$ with its order at least 2,} \\
(-1)^t & \text{ if $D$ is a sum of $t$ distinct prime divisors.}
\end{cases}\]
Also, define the summatory function
\[ M_{C/\mathbb{F}_q}(X) := \sum_{\deg D \le X} \mu_{C/\mathbb{F}_q}(D),\]
for all positive integers $X$. 

The starting point is an asymptotic formula for $M_{C/\mathbb{F}_q}(X)$ as $X\to\infty$, which is given in Proposition \ref{thm:EquationMX}.
Roughly speaking, this formula says that $M_{C/\mathbb{F}_q}(X) = O(X^{r-1}q^{X/2})$, where $r$ is the maximum order of all inverse zeros for $C/\mathbb{F}_q$ (see \eqref{eq:RiemannHypothesis} for the definition of inverse zeros and their orders.) 
From this, if all inverse zeros are simple, we deduce in Corollary \ref{thm:MertensConjectureFunctionField} that the quantity
\[
B(C/\mathbb{F}_q):= \limsup_{X\to\infty}\frac{M_{C/\mathbb{F}_q}(X)}{q^{(X+1)/2}}
\]
exists as a finite number. This could be regarded as a (weak) function field analog of the inequality \eqref{eq:MertensConjecture}.

However, it is obvious that the boundedness of $M_{C/\mathbb{F}_q}(X)/q^{X/2}$, when all inverse zeros are simple, stems from the fact that there are only finitely many inverse zeros for any given $C$.
So, rather than studying $B(C/\mathbb{F}_q)$ for a single curve $C$, it would be interesting to find the \emph{average} of $B(C/\mathbb{F}_q)$ over a family $\mathcal{F}$ of curves whose genus $g$ is large. 
Instead, what we would like to do in this paper is find the average of $B(C/\mathbb{F}_q)$ over $\mathcal{F}$ with the \emph{scalar field} $\mathbb{F}_q$ \emph{growing larger}. The advantage of dealing with this \emph{geometric average} is that this set-up enables us to use the powerful tool of Katz and Sarnak's reformulation of the equidistribution theorem of Deligne given in \cite{KS99}. Still, we do not quite succeed in computing the geometric average of $B(C/\mathbb{F}_q)$ but obtain something close to it. This is explained at a later part of this introduction.

At this point, we need to give a definition of the \emph{Linear Independence} property.
\begin{definition}[Linear Independence (LI)]\label{def:LI}
Let $\gamma_1= \sqrt q\mathrm{e}^{\mathrm{i} \theta_1},  \dots, \gamma_{2g}= \sqrt q\mathrm{e}^{\mathrm{i} \theta_{2g}}$ be the inverse zeros of a curve $C/\mathbb{F}_q$.
We say that $C$ satisfies the \emph{Linear Independence} (LI) property if the set, 
\[
\{ \theta_j \mid 0\le \theta_j \le \pi \text{ with }j = 1, \dots, 2g \} \cup \{ \pi \},
\]
counting the inverse zeros with multiplicity, is linearly independent over $\mathbb{Q}$. 
\end{definition}
The number field version of LI, which states that nonnegative ordinates of the critical zeros of the Riemann zeta function or Dirichlet $L$-functions are linearly independent over $\mathbb{Q}$, plays a key role in the aforementioned work \cite{Ng04} of Ng, as well as in the work of Rubinstein and Sarnak on the prime number race in \cite{RS94}. Note that LI is called the \emph{Grand Simplicity Hypothesis} in \cite{RS94} and \cite{Cha08}.
The fact that such a property on zeta zeros has a consequence in understanding the exact growth of $M(x)$ had already been made clear by Ingham in \cite{Ing42}, well before Odlyzko and te Riele disproved Mertens's conjecture. Ingham proved in 1942 that $\limsup_{x\to\infty}M(x)/\sqrt x = \infty$ if LI holds true for the Riemann zeta function.

Unfortunately, there is currently very little direct theoretical or numerical evidence to support the number field version of LI. However, in the function field case, things are better understood. There are known examples (see \cite{Cha08}, \cite{CI11} and \cite{Kow08}) where LI can be confirmed positively and, in some other cases, negatively. Moreover, the work \cite{Kow08} of Kowalski shows that most curves in a certain one-parameter family of hyperelliptic curves satisfy LI. For more background and precise statements of Kowalski's results, the readers are referred to \cite{Kow08} and Remark \ref{rmk:LI} of this paper.

The importance of LI in our work comes from Theorem \ref{thm:GonekConjectureAnalog}, which states that, if $C$ satisfies LI, then the bound 
\[
D(C/\mathbb{F}_q):=
\frac1{q^{1/2}}
\sum_{\gamma}
\left|
\frac{\gamma}{{Z_{C/\mathbb{F}_q}}'({\gamma}^{-1})} \frac{\gamma}{\gamma - 1}
\right|
\]
of $B(C/\mathbb{F}_q)$ we found in Corollary \ref{thm:MertensConjectureFunctionField} becomes sharp, that is, $B(C/\mathbb{F}_q) = D(C/\mathbb{F}_q)$ under LI.
It turns out that Kowalski's argument in \cite{Kow08} can be easily extended to prove that \emph{most} curves in the family $\mathcal{H}_{2g+1}$ of hyperelliptic curves of genus $g$ satisfy LI (Theorem \ref{thm:MostLI}). As a consequence, we have that $B(C/\mathbb{F}_q) = D(C/\mathbb{F}_q)$ for most curves $C/\mathbb{F}_q$ in $\mathcal{H}_{2g+1}$. This is why we choose to work with $\mathcal{F} := \mathcal{H}_{2g+1}$ in this paper.  

The next step is to use Deligne's equidistribution theorem to find the geometric average of $D(C/\mathbb{F}_q)$. 
We show in Theorem \ref{thm:MainTheorem} that a certain \emph{truncated} version of the geometric average of $D(C/\mathbb{F}_q)$ is equal to the integral
\[
\mathcal{I}(g):= \int_{\USp(2g, \mathbb{C})} \varphi(U) \, \mathrm{d}\mu_{\mathrm{Haar}}(U).
\]
Here, $\mathrm{d}\mu_{\mathrm{Haar}}$ is the unique probability Haar measure on the unitary symplectic group $\USp(2g, \mathbb{C})$. The function $\varphi$ is defined in the equations \eqref{eq:ZU} and \eqref{eq:Varphi}, to which we refer the readers for its definition. But, we note here that this situation is similar to the number field case in \cite{Ng04} where the discrete negative moments $J_{-k}(T)$ of $\zeta'(s)$ play an important role in understanding the growth of $M(x)$.

In \S\ref{sec:RMT}, which can be read independently of other parts of this paper, we study the integral $\mathcal{I}(g)$, especially its asymptotic behavior as $g\to\infty$.
A key result here is Theorem \ref{thm:MomentThm}, which finds asymptotic expression of the average value of positive powers of the characteristic polynomials of random unitary symplectic matrices. 
Main tool is a recent formula in \cite{DIK11} by Deift, Its, and Krasovsky on Hankel's determinants with singular weight functions. 

Our result seems to suggest that the geometric average of $B(C/\mathbb{F}_q)$ over $\mathcal{H}_{2g+1}$ is given by the asymptotic formula in \eqref{eq:AnalogOfB}.
However, there are, at least, two major issues we cannot resolve in this paper.
First is the extent of the possible failure of LI in $\mathcal{H}_{2g+1}$. Without LI, it may happen that $B(C/\mathbb{F}_q) < D(C/\mathbb{F}_q)$, therefore, the geometric average of $D(C/\mathbb{F}_q)$ might potentially overestimate that of $B(C/\mathbb{F}_q)$. Even though the set of conjugacy classes in $\USp(2g, \mathbb{C})$ whose eigenvalues have no (nontrivial multiplicative) relations form a measure zero subset with respect to the Haar measure, it is still dense in $\USp(2g, \mathbb{C})$, and it is unclear if one could utilize the equidistribution theorem to control the difference between the averages of $B(C/\mathbb{F}_q)$ and $D(C/\mathbb{F}_q)$.
The second issue is that we cannot (yet) finish the proof of the asymptotic formula \eqref{eq:AnalogOfB} of $\mathcal{I}(g)$. To do so, it seems that we need to have a finer control on the error term in the formula of Deift, Its, Krasovsky. For more detailed explanation on this point, see \S\ref{sec:RMT}, especially, Remark \ref{rmk:Error}.

\subsection*{Acknowledgements} 
The author is greatly indebted to Emmanuel Kowalski for helpful discussion and to Jinho Baik, who brought to the author's attention the work of Deift, Its and Krasovsky \cite{DIK11} and explained its usefulness in proving Theorem \ref{thm:MomentThm}. 
Also, the author is thankful to Peter Humphries, who found an error in Proposition \ref{thm:EquationMX} in an earlier version of this paper, in addition to offering many other helpful remarks.

\section{Asymptotic formula and the Linear Independent property}\label{sec:Asymptotic}
Throughout this paper, we write $\#A$ for the cardinality of for a finite set $A$. We fix a power $q$ of an odd prime $p>2$ and we denote by $\mathbb{F}_q$ a finite field with $q$ elements. For each $n\ge1$, we have a unique extension $\mathbb{F}_{q^n}$ (inside a chosen algebraic closure of $\mathbb{F}_q$) over $\mathbb{F}_q$ of degree $n$. For a nonsingular projective of curve $C$ over $\mathbb{F}_q$ of genus $g$, \emph{the zeta function $Z_{C/\mathbb{F}_q}(u)$ of $C$ over} $\mathbb{F}_q$ is defined by
\[ Z_{C/\mathbb{F}_q}(u) := \exp\left( \sum_{n\ge 1} \frac{\# C(\mathbb{F}_{q^n})}{n} u^n\right),
\]
which is initially defined as a formal power series in $u$ with rational coefficients. It is known from the Riemann hypothesis for curves over finite fields that
\begin{equation}\label{eq:Zeta}
Z_{C/\mathbb{F}_q}(u) = \frac{P_{C/\mathbb{F}_q}(u)}{(1-u)(1-qu)},
\end{equation}
where $P_{C/\mathbb{F}_q}(u)$ is a polynomial in $u$ with integer coefficients of degree $2g$, which factorizes as
\begin{equation}\label{eq:RiemannHypothesis}
P_{C/\mathbb{F}_q}(u) = \prod_{j=1}^{2g}(1-\gamma_ju)
\end{equation}
for some complex numbers $\gamma_j$ with $|\gamma_j| = \sqrt q$ for all $j = 1, \dots, 2g$. These numbers $\gamma_j$ are called the \emph{inverse zeros} of $C$. By an \emph{order} of an inverse zero $\gamma$, we mean the multiplicity of $\gamma^{-1}$ as a root of $P_{C/\mathbb{F}_q}(u)$. If $\gamma$ is of order one, we will say that the inverse zero $\gamma$ is \emph{simple}.

\subsection{Asymptotic formula of $M_{C/\mathbb{F}_q}(X)$ }
Define $Z_{\mu}(u)$ to be the following Dirichlet series (in $u$) associated with $\mu_{C/\mathbb{F}_q}(D)$, together with the change of variable $u:= q^{-s}$,
\begin{equation}\label{eq:DirichletMoebius}
Z_{\mu}(u) :=  \sum_{D\ge 0} \frac{\mu_{C/\mathbb{F}_q}(D)}{\mathcal{N}D^s} =  \sum_{N=0}^{\infty} c_{\mu}(N) u^N.
\end{equation}
Here, $\mathcal{N}D$ is the absolute norm of the divisor $D$ and $c_{\mu}(N) := \sum_{\deg(D) = N} \mu_{C/\mathbb{F}_q}(D)$. From the Euler product expression of $Z_{\mu}(u)$, it is easy to show (see Chapter 1 of \cite{Ros02} or follow the same argument as in the number field case) that 
\begin{equation}\label{eq:DirichletInverse}
Z_{\mu}(u) = \frac1{Z_{C/\mathbb{F}_q}(u)} = \frac{(1-u)(1-qu)}{P_{C/\mathbb{F}_q}(u)}.
\end{equation} 
From \eqref{eq:DirichletMoebius} and the definition of $c_{\mu}(N)$,  we have that 
\begin{equation}\label{eq:MasCmu}
M_{C/\mathbb{F}_q}(X) = \sum_{N\le X} c_{\mu}(N).
\end{equation}
Therefore, the crucial step in finding the asymptotic formula of $M_{C/\mathbb{F}_q}(X)$ is to estimate the coefficients $c_{\mu}(N)$.

First, we consider the easiest case when $C$ is of genus 0. In this case, $P_{C/\mathbb{F}_q}(u) = 1$ and 
\[ Z_{\mu}(u) = (1-u)(1-qu) = 1-(q+1)u +qu^2.\] 
So, $c_{\mu}(N) =0 $ for all $N\ge 3$ and $c_{\mu}(N)=1, -(q+1), q$ if $N=0, 1, 2$ respectively. This easily determines the values of $M_{C/\mathbb{F}_q}(X)$ for all $X$. In particular, we obtain the trivial bound of $|M_{C/\mathbb{F}_q}(X)| \le q$ for all $X$.

Next, we consider the general case for arbitrary genus $g$.
Let $C_1$ is a circular path in the complex plane of radius $1$ centered at the origin, oriented counterclockwise. We calculate the integral 
\[ \frac1{2\pi i}\int_{C_1} \frac{Z_{\mu}(u)}{u^{N+1}}\, \mathrm{d}u \]
using Cauchy's theorem. 

First, we note that the above integral can be easily bounded independently of $N$. From \eqref{eq:DirichletInverse} and \eqref{eq:RiemannHypothesis},
\begin{equation}\label{eq:IntegralBound}
\left|\frac1{2\pi i}\int_{C_1} \frac{Z_{\mu}(u)}{u^{N+1}}\, \mathrm{d}u \right| \le \frac1{2\pi} \int_{C_1} \left| \frac{Z_{\mu}(u)}{u^{N+1}}\right| |\mathrm{d}u| \le  
\frac{2(1+q)}{(1-\sqrt q)^{2g}}.
\end{equation}

Next, we see from \eqref{eq:DirichletMoebius}, \eqref{eq:DirichletInverse}, and \eqref{eq:RiemannHypothesis} that the function $Z_{\mu}(u)/u^{N+1}$ has residues at $u=0$ and $u = {\gamma}^{-1}$ for all inverse zeros $\gamma$. The series expression \eqref{eq:DirichletMoebius} implies that the residue of $Z_{\mu}(u)/u^{N+1}$ at $u=0$ is $c_{\mu}(N)$. Therefore, if we define $R_{C/\mathbb{F}_q}(N, \gamma)$ to be the residue of $Z_{\mu}(u)/u^{N+1}$ at $u={\gamma}^{-1}$ for any inverse zero $\gamma$, Cauchy's theorem yields
\[
\left|
c_{\mu}(N) + \sum_{\gamma} R_{C/\mathbb{F}_q}(N, \gamma)
\right|
\le
\frac{2(1+q)}{(1-\sqrt q)^{2g}}.
\]
By letting $N\to\infty$, we get
\begin{equation}\label{eq:Cmu}
c_{\mu}(N) = -\sum_{\gamma} R_{C/\mathbb{F}_q}(N, \gamma) + O(1).
\end{equation}
So, in order to obtain an asymptotic formula for $M_{C/\mathbb{F}_q}(X)$, we will need to calculate the residues $R_{C/\mathbb{F}_q}(N, \gamma)$. We do this by finding the Laurent series expansion of $Z_{\mu}(u)/u^{N+1} = 1/(Z_{C/\mathbb{F}_q}(u)u^{N+1})$ directly. 

From binomial theorem,
\begin{equation}\label{eq:UN}
u^{-(N+1)} = 
\gamma^{N+1}\sum_{k=0}^{\infty} (-1)^k\binom{N+k}{k} \gamma^k (u- \gamma^{-1})^k. 
\end{equation}
Let $r$ be the order of $\gamma$. Then the power series expansion of $Z_{C/\mathbb{F}_q}(u)$ at $u = \gamma^{-1}$ starts with
\[
Z_{C/\mathbb{F}_q}(u) = 
\frac{{Z_{C/\mathbb{F}_q}}^{(r)}(\gamma^{-1})}{r!}(u - \gamma^{-1})^r + \cdots.
\]
Here ${Z_{C/\mathbb{F}_q}}^{(r)}(u)$ is the $r$-th derivative of $Z_{C/\mathbb{F}_q}(u)$ (with respect to the variable $u$). Then, the Laurent series expansion of $1/Z_{C/\mathbb{F}_q}(u)$ at $u = \gamma^{-1}$ begins with
\begin{equation}\label{eq:Laurent}
\frac1{Z_{C/\mathbb{F}_q}(u)} = 
\frac{r!}{{Z_{C/\mathbb{F}_q}}^{(r)}(\gamma^{-1})}(u - \gamma^{-1})^{-r} + \cdots.
\end{equation}
Therefore, the residue $R_{C/\mathbb{F}_q}(N, \gamma)$ is obtained by multiplying the two series \eqref{eq:UN} and \eqref{eq:Laurent} and extracting the coefficient of $(u - \gamma^{-1})^{-1}$. To be precise,
\begin{align}
R_{C/\mathbb{F}_q}(N, \gamma) &= \frac{r!}{{Z_{C/\mathbb{F}_q}}^{(r)}(\gamma^{-1})} \gamma^{N+1}(-1)^{r-1}  \binom{N+r-1}{r-1}\gamma^{r-1} + \cdots \nonumber \\
&= \frac{r!}{{Z_{C/\mathbb{F}_q}}^{(r)}(\gamma^{-1})} \gamma^{N+1} (-1)^{r-1} \frac{N^{r-1}}{(r-1)!} \gamma^{r-1} + \cdots \nonumber \\
&=
\frac{\gamma^{N+r}(-1)^{r-1} r}{{Z_{C/\mathbb{F}_q}}^{(r)}(\gamma^{-1})} N^{r-1} + \cdots,\label{eq:RN}
\end{align}
where all the suppressed terms are polynomials in $N$ of degree $r-2$ or less. 

If we sum the equation \eqref{eq:RN} over $N=1, \cdots, X$, we get
\begin{equation}\label{eq:Asymptotic}
\sum_{N=1}^{X} R_{C/\mathbb{F}_q}(N, \gamma) =
\frac{\gamma^{r}(-1)^{r-1} r}{{Z_{C/\mathbb{F}_q}}^{(r)}(\gamma^{-1})}
\frac{\gamma}{\gamma-1}
X^{r-1} \gamma^X
+ 
O(X^{r-2} \gamma^X),
\end{equation}
as $X\to\infty$. This is an immediate consequence of Lemma \ref{thm:Series}, which can be proved by partial summation \cite[Theorem 4.2]{Apo76}, as outlined in \cite[Lemma 2.2]{Cha08}. So, we omit the proof.
\begin{lemma}\label{thm:Series}
Let $\beta$ be a complex number with $|\beta| > 1$ and $k$ be a nonnegative integer. 
\[
\lim_{X\to\infty} \frac1{X^k \beta^X}
\left(
\sum_{N = 1}^X N^k \beta^N
\right)
=
\frac{\beta}{\beta-1}.
\]
\end{lemma}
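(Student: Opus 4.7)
The plan is to apply Abel's summation formula to $\sum_{N=1}^X N^k \beta^N$, taking $f(N) = N^k$ as the smooth factor and $\beta^N$ as the summand whose partial sums are known in closed form. Writing
\[
A(X) := \sum_{N=1}^{X} \beta^N = \frac{\beta(\beta^X - 1)}{\beta - 1},
\]
partial summation yields
\[
\sum_{N=1}^{X} N^k \beta^N
= X^k A(X) - \sum_{N=1}^{X-1} A(N)\bigl[(N+1)^k - N^k\bigr].
\]

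After dividing both sides by $X^k \beta^X$, the first term becomes $A(X)/\beta^X$, which manifestly tends to $\beta/(\beta-1)$ since $|\beta|>1$ forces $\beta^{-X} \to 0$. This already delivers the claimed limit, so it remains only to show that the second sum contributes $o(X^k |\beta|^X)$.

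For this, I would combine two elementary estimates: the binomial expansion $(N+1)^k - N^k = O(N^{k-1})$ (with the convention that this is $0$ when $k=0$, handling that edge case separately), and the trivial bound $|A(N)| = O(|\beta|^N)$ which is valid for complex $\beta$ with $|\beta|>1$. Factoring $N^{k-1} \le X^{k-1}$ out of the sum and evaluating the remaining geometric series in $|\beta|^N$ then gives
\[
\left| \sum_{N=1}^{X-1} A(N)\bigl[(N+1)^k - N^k\bigr] \right|
= O\!\left( X^{k-1} |\beta|^X \right),
\]
so after dividing by $X^k |\beta|^X$ the error is $O(1/X) \to 0$.

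There is no serious obstacle anywhere here; the calculation is entirely mechanical once one writes down partial summation and the closed form for $A(X)$. The only items worth a sanity check are the $k=0$ case (trivial, since $A(X)/\beta^X$ \emph{is} the desired ratio) and the fact that the geometric-series estimate works uniformly for complex $\beta$ on any compact subset of $|\beta|>1$. This is presumably why the author elected to omit the details.
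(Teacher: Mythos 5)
Your argument is correct, and it is the same approach the paper itself indicates: the paper omits the proof but states that the lemma follows by partial summation (citing Apostol, Theorem 4.2, and the outline in Lemma 2.2 of \cite{Cha08}), which is precisely your Abel-summation computation with the closed form of the geometric sum and the $O(X^{k-1}|\beta|^X)$ bound on the remainder. The only cosmetic remark is that uniformity in $\beta$ on compact subsets of $|\beta|>1$ is not needed, since $\beta$ is fixed throughout.
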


Denote by $\theta(\gamma)$ the argument of the complex number $\gamma$, so that $\gamma = \sqrt q\mathrm{e}^{\mathrm{i} \theta(\gamma)}$. Then, \eqref{eq:Asymptotic} becomes
\begin{equation}\label{eq:AsymptoticNormalized}
-\frac1{X^{r-1} q^{X/2}} \sum_{N=1}^{X} R_{C/\mathbb{F}_q}(N, \gamma) =
\frac{(-\gamma)^{r} r}{{Z_{C/\mathbb{F}_q}}^{(r)}(\gamma^{-1})}
\frac{\gamma}{\gamma-1}
\mathrm{e}^{\mathrm{i} X\theta(\gamma)} 
+ 
o(1).
\end{equation}
Now, the equations \eqref{eq:MasCmu}, \eqref{eq:Cmu}, \eqref{eq:AsymptoticNormalized} together yield the estimate of $M_{C/\mathbb{F}_q}(X)$ in Proposition \ref{thm:EquationMX}.

\begin{proposition}\label{thm:EquationMX}
For an inverse zero $\gamma$, let $\theta(\gamma)$ be the argument of $\gamma$, so that $\gamma = \sqrt q \mathrm{e}^{\mathrm{i}\theta(\gamma)}$. Also, let $r$ be the maximum order among all the inverse zeros $\gamma$ of $Z_{C/\mathbb{F}_q}(u)$, that is,
\[
r = \max\{ \text{order of } \gamma_j\}_{j=1}^{2g}
\]
Then, as $X\to\infty$,
\[ 
\frac{M_{C/\mathbb{F}_q}(X)}{X^{r-1}q^{X/2}} = 
\sum_{\mathrm{ord}(\gamma)=r} \frac{(-\gamma)^r r}{{Z_{C/\mathbb{F}_q}}^{(r)}({\gamma}^{-1})} \frac{\gamma}{\gamma - 1} \mathrm{e}^{\mathrm{i} X\theta(\gamma)}+ o(1).
\]
In particular, if all inverse zeros $\{ \gamma_j\}_{j=1}^{2g}$ are simple, then
\[ 
\frac{M_{C/\mathbb{F}_q}(X)}{q^{X/2}} = 
-\sum_{j=1}^{2g} \frac{\gamma_j}{{Z_{C/\mathbb{F}_q}}'({\gamma_j}^{-1})} \frac{\gamma_j}{\gamma_j - 1} \mathrm{e}^{\mathrm{i} X\theta(\gamma_j)}+ o(1).
\]
\end{proposition}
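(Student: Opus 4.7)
The plan is to assemble the three ingredients already in place: the partial-sum identity \eqref{eq:MasCmu}, the residue decomposition \eqref{eq:Cmu} obtained from the contour integral estimate \eqref{eq:IntegralBound}, and the normalized asymptotic \eqref{eq:AsymptoticNormalized} for the summed residues at a single inverse zero. The proposition is essentially the combination of these three statements, so the work lies in verifying that the error terms collapse correctly after normalization.

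First I would sum the residue decomposition \eqref{eq:Cmu} over $N = 1, 2, \dots, X$ and, since the collection of inverse zeros $\gamma$ is finite, interchange the two summations. Combined with \eqref{eq:MasCmu} this yields
\[
M_{C/\mathbb{F}_q}(X) \;=\; -\sum_{\gamma} \sum_{N=1}^{X} R_{C/\mathbb{F}_q}(N,\gamma) \;+\; O(X),
\]
where the $O(X)$ absorbs the $N=0$ contribution and the summation of the $O(1)$ error in \eqref{eq:Cmu}. I would then apply \eqref{eq:AsymptoticNormalized} to each inner sum: if $\gamma$ has order $r(\gamma)$, then $\sum_{N=1}^{X} R_{C/\mathbb{F}_q}(N,\gamma)$ has a main term of order $X^{r(\gamma)-1} q^{X/2}$ with an explicit coefficient involving ${Z_{C/\mathbb{F}_q}}^{(r(\gamma))}(\gamma^{-1})$ and $\gamma/(\gamma - 1)$, and an error of order $o(X^{r(\gamma)-1}q^{X/2})$.

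Dividing the resulting identity by $X^{r-1}q^{X/2}$, with $r$ the maximum order, I would check three things: (a) the $O(X)$ term becomes $O(X^{2-r}q^{-X/2}) = o(1)$ since $q > 1$; (b) every inverse zero with $r(\gamma) < r$ contributes $O(X^{r(\gamma)-r}) = o(1)$, so only inverse zeros of maximal order survive; and (c) each $\gamma$ with $r(\gamma) = r$ contributes exactly $\frac{(-\gamma)^r r}{{Z_{C/\mathbb{F}_q}}^{(r)}(\gamma^{-1})} \frac{\gamma}{\gamma-1} \mathrm{e}^{\mathrm{i} X\theta(\gamma)}$ up to $o(1)$, where the factor $\mathrm{e}^{\mathrm{i} X\theta(\gamma)}$ arises as $\gamma^X/q^{X/2}$. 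Summing these contributions gives the first formula; the second formula is the specialization $r = 1$, in which every inverse zero is maximal and $(-\gamma_j)^1 \cdot 1 = -\gamma_j$.

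There is no serious obstacle here — the real analytic content has already been carried out in deriving \eqref{eq:Cmu} and \eqref{eq:AsymptoticNormalized}. The only mildly delicate point is ensuring that the sub-leading polynomial corrections suppressed in \eqref{eq:RN}, which after summation via Lemma \ref{thm:Series} contribute $O(X^{r(\gamma)-2} q^{X/2})$, are truly absorbed into $o(X^{r-1}q^{X/2})$; this follows for each $\gamma$ individually and then globally because the sum over $\gamma$ is a finite sum.
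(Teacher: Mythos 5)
Your proposal is correct and follows exactly the route the paper takes: the Proposition is obtained by combining \eqref{eq:MasCmu}, \eqref{eq:Cmu}, and \eqref{eq:AsymptoticNormalized}, with the lower-order zeros and the summed $O(1)$ errors absorbed into the $o(1)$ after dividing by $X^{r-1}q^{X/2}$, just as you describe. Your bookkeeping of the error terms (including the sign $(-\gamma)^r$ and the specialization $r=1$) matches the paper's computation, so there is nothing to add.
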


\begin{corollary}\label{thm:MertensConjectureFunctionField}
The notations are as above. 
\[ 
\limsup_{X\to\infty}\frac{M_{C/\mathbb{F}_q}(X)}{X^{r-1}q^{X/2}} 
\le
\sum_{\mathrm{ord}{\gamma}=r}
\left|
\frac{\gamma^r r}{{Z_{C/\mathbb{F}_q}}^{(r)}({\gamma}^{-1})} \frac{\gamma}{\gamma - 1}
\right|.
\]
\end{corollary}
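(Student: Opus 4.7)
The plan is to derive the corollary directly from Proposition \ref{thm:EquationMX} by taking absolute values and applying the triangle inequality. Specifically, the proposition provides the asymptotic identity
\[
\frac{M_{C/\mathbb{F}_q}(X)}{X^{r-1}q^{X/2}} = \sum_{\mathrm{ord}(\gamma)=r} \frac{(-\gamma)^r r}{{Z_{C/\mathbb{F}_q}}^{(r)}({\gamma}^{-1})} \frac{\gamma}{\gamma - 1} \mathrm{e}^{\mathrm{i} X\theta(\gamma)}+ o(1),
\]
and the sum on the right is a finite trigonometric polynomial in $X$ (the number of inverse zeros of order $r$ is bounded by $2g$), so every step below is legitimate in the limit.

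The first step is to bound the modulus of the main term termwise. Since $|\mathrm{e}^{\mathrm{i} X\theta(\gamma)}| = 1$ for every real $\theta(\gamma)$, and $|(-\gamma)^r| = |\gamma|^r$, the triangle inequality gives
\[
\left| \sum_{\mathrm{ord}(\gamma)=r} \frac{(-\gamma)^r r}{{Z_{C/\mathbb{F}_q}}^{(r)}({\gamma}^{-1})} \frac{\gamma}{\gamma - 1} \mathrm{e}^{\mathrm{i} X\theta(\gamma)}\right| \le \sum_{\mathrm{ord}(\gamma)=r} \left| \frac{\gamma^r r}{{Z_{C/\mathbb{F}_q}}^{(r)}({\gamma}^{-1})} \frac{\gamma}{\gamma - 1} \right|
\]
for every $X$. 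Combining this with the $o(1)$ error term from the proposition, we get
\[
\left|\frac{M_{C/\mathbb{F}_q}(X)}{X^{r-1}q^{X/2}}\right| \le \sum_{\mathrm{ord}(\gamma)=r} \left| \frac{\gamma^r r}{{Z_{C/\mathbb{F}_q}}^{(r)}({\gamma}^{-1})} \frac{\gamma}{\gamma - 1} \right| + o(1)
\]
as $X\to\infty$.

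Finally, passing to $\limsup_{X\to\infty}$ absorbs the $o(1)$ term and yields the claimed inequality. There is no substantive obstacle here: the content of the corollary is essentially a restatement of the asymptotic formula, and the only thing to verify is that the quantities $\gamma^r/{Z_{C/\mathbb{F}_q}}^{(r)}(\gamma^{-1})$ and $\gamma/(\gamma-1)$ are well-defined, which follows from $|\gamma| = \sqrt{q} > 1$ (so $\gamma \ne 1$) and from the fact that $\gamma^{-1}$ is a root of $P_{C/\mathbb{F}_q}$ of exact multiplicity $r$ (so ${Z_{C/\mathbb{F}_q}}^{(r)}(\gamma^{-1}) \ne 0$ by the Laurent expansion \eqref{eq:Laurent}). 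No new estimates beyond those already established in Proposition \ref{thm:EquationMX} are required.
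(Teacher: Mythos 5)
Your argument is correct and is exactly the intended derivation: the corollary is an immediate consequence of Proposition \ref{thm:EquationMX}, obtained by bounding the finite exponential sum termwise via the triangle inequality (using $|\mathrm{e}^{\mathrm{i}X\theta(\gamma)}|=1$) and letting the $o(1)$ term vanish in the $\limsup$. Your additional remarks on well-definedness (that $\gamma\neq 1$ and that $Z_{C/\mathbb{F}_q}$ vanishes to exact order $r$ at $\gamma^{-1}$) are consistent with what the paper implicitly assumes, so nothing further is needed.
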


The next theorem follows from the adaptation of Rubinstein and Sarnak's argument in \cite{RS94} for the function field setting and is identical to that of Theorem 3.2 in \cite{Cha08}, so we omit its proof.
\begin{theorem}\label{thm:LimitingDistribution}
The notations are the same as in Proposition \ref{thm:EquationMX}.
The function $M_{C/\mathbb{F}_q}(X)/(X^{r-1}q^{X/2})$ has a limiting distribution  $\mu$ on $\mathbb{R}$, that is,
\[ \lim_{Y\to\infty}\frac1Y\sum_{X=1}^Y 
f\left( \frac{M_{C/\mathbb{F}_q}(X)}{X^{r-1}q^{X/2}} \right) = \int_{-\infty}^{\infty} f(x) \, \mathrm{d}\mu(x)
\]
for all bounded continuous function $f$ on $\mathbb{R}$.
\end{theorem}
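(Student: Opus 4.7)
The plan is to combine the asymptotic formula in Proposition \ref{thm:EquationMX} with the Kronecker--Weyl equidistribution theorem. Write Proposition \ref{thm:EquationMX} as $M_{C/\mathbb{F}_q}(X)/(X^{r-1}q^{X/2}) = F(X) + \epsilon(X)$, where
\[
F(X) := \sum_{\mathrm{ord}(\gamma)=r} c_\gamma \, \mathrm{e}^{\mathrm{i} X\theta(\gamma)}, \qquad c_\gamma := \frac{(-\gamma)^r r}{{Z_{C/\mathbb{F}_q}}^{(r)}({\gamma}^{-1})} \frac{\gamma}{\gamma-1},
\]
and $\epsilon(X)=o(1)$. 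Since $P_{C/\mathbb{F}_q}(u)$ has real coefficients, the multiset of inverse zeros is stable under complex conjugation and $c_{\bar\gamma} = \overline{c_\gamma}$, so $F$ is real-valued and bounded in absolute value by $M:=\sum_{\mathrm{ord}(\gamma)=r} |c_\gamma|$.

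Next I would enumerate the distinct angles $\theta_1,\dots,\theta_s$ taken by $\theta(\gamma)$ for $\gamma$ of order $r$, and introduce the continuous function $\Phi:\mathbb{T}^s\to\mathbb{R}$ given by
\[
\Phi(x_1,\dots,x_s) := \mathrm{Re}\!\left( \sum_{j=1}^s \Bigg( \sum_{\theta(\gamma)=\theta_j} c_\gamma \Bigg) \mathrm{e}^{\mathrm{i} x_j} \right),
\]
where $\mathbb{T} := \mathbb{R}/2\pi\mathbb{Z}$, so that $F(X) = \Phi(X\theta_1,\dots,X\theta_s)$. The Kronecker--Weyl theorem for integer orbits asserts that the sequence $\{(X\theta_1,\dots,X\theta_s) \bmod 2\pi\}_{X\ge 1}$ equidistributes with respect to the normalized Haar measure $m_H$ on the closed subgroup $H := \overline{\mathbb{Z}\cdot (\theta_1,\dots,\theta_s)} \subset \mathbb{T}^s$. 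For any bounded continuous $f:\mathbb{R}\to\mathbb{R}$, the composition $f\circ\Phi$ is continuous on the compact torus $\mathbb{T}^s$, so equidistribution yields
\[
\lim_{Y\to\infty} \frac{1}{Y}\sum_{X=1}^Y f(F(X)) = \int_H (f\circ\Phi) \, \mathrm{d}m_H = \int_{\mathbb{R}} f \, \mathrm{d}\mu,
\]
where $\mu := (\Phi|_H)_* m_H$ is the candidate limiting distribution.

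Finally, I would reincorporate the error term. Since $F(X)$ is bounded and $\epsilon(X)\to0$, the full sequence $M_{C/\mathbb{F}_q}(X)/(X^{r-1}q^{X/2})$ lies in a bounded interval $I$ on which $f$ is uniformly continuous; hence $f(F(X)+\epsilon(X)) - f(F(X)) \to 0$ pointwise and its Cesàro mean vanishes, allowing me to transfer the limit above from $F$ to $M_{C/\mathbb{F}_q}(X)/(X^{r-1}q^{X/2})$. The main (mild) obstacle is the bookkeeping that produces the real-valued $\Phi$ from the complex sum over $\gamma$'s of order $r$ and that identifies the pushforward $\mu$ intrinsically; the substantive input, Kronecker--Weyl equidistribution, is classical. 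This is why the argument parallels \cite[Thm.~3.2]{Cha08} and can be omitted in detail.
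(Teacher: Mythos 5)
Your argument is correct and is exactly the route the paper has in mind: the paper omits the proof, citing the adaptation of Rubinstein--Sarnak in Theorem 3.2 of \cite{Cha08}, which is precisely this decomposition into a finite trigonometric polynomial plus an $o(1)$ term, followed by the discrete Kronecker--Weyl equidistribution theorem on the closure of the orbit in the torus and the pushforward of its Haar measure, with the error term absorbed by uniform continuity of $f$ on a bounded interval. No substantive difference from the paper's (cited) proof.
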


\subsection{Application of the Linear Independence property}
Suppose that the curve $C$ satisfies the Linear Independence property (Definition \ref{def:LI}). One immediate consequence of LI is that all of inverse zeros of $C$ are simple, therefore, the formula in Proposition \ref{thm:EquationMX} becomes
\begin{equation}\label{eq:EquationMXWithLI}
\frac{M_{C/\mathbb{F}_q}(X)}{q^{X/2}} = -\sum_{j=1}^{2g} \left|\frac{\gamma_j}{{Z_{C/\mathbb{F}_q}}'({\gamma_j}^{-1})} \frac{\gamma_j}{\gamma_j - 1}\right| \cos(\omega(\gamma_j) + X\theta(\gamma_j))+ o(1), 
\end{equation}
where $\omega(\gamma_j)$ is the argument
\[ \omega(\gamma_j) := \arg\left( \frac{{\gamma_j}}{{Z_{C/\mathbb{F}_q}}'({\gamma_j}^{-1})} \frac{\gamma_j}{\gamma_j - 1} \right).\]

Another consequence of LI is that the measure $\mu$ whose existence is stated in Theorem \ref{thm:LimitingDistribution} is absolutely continuous and we can write down the Fourier transform $\hat{\mu}$ explicitly
\begin{equation}\label{eq:FourierTransform}
\hat{\mu}(\xi) = \prod_{j=1}^{g} J_0\left(2\left|
\frac{{\gamma_j}}{{Z_{C/\mathbb{F}_q}}'({\gamma_j}^{-1})} \frac{\gamma_j}{\gamma_j - 1}\right|  \xi
\right),
\end{equation}
where $J_0(z)$ is the Bessel function of the first kind. 
Again, (the adaptation of) Rubinstein and Sarnak's proof can be used to establish this, so we omit the proof of this formula. However, this fact allows us to prove that the sum of magnitudes of the oscillating terms in the right side of \eqref{eq:EquationMXWithLI}, which provides a bound of the left side of \eqref{eq:EquationMXWithLI}, does indeed give the sharp bound as $X\to\infty$. This is the key observation which we will use in the next sections, and we summarize it as a theorem below.

\begin{theorem}\label{thm:GonekConjectureAnalog}
If $C$ satisfies LI, then we have
\[
\limsup_{X\to \infty} \frac{M_{C/\mathbb{F}_q}(X)}{q^{X/2}} 
=
\sum_{j=1}^{2g} \left|\frac{{\gamma_j}}{{Z_{C/\mathbb{F}_q}}'({\gamma_j}^{-1})} \frac{\gamma_j}{\gamma_j - 1}\right|.
\]
\end{theorem}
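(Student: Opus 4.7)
The plan is to start from \eqref{eq:EquationMXWithLI}, pair up conjugate inverse zeros, and then apply Kronecker's approximation theorem (equivalently Weyl equidistribution) to drive every cosine term to $-1$ simultaneously along a subsequence of positive integers. Because $P_{C/\mathbb{F}_q}(u)\in\mathbb{Z}[u]$, the inverse zeros form $g$ complex-conjugate pairs $\{\gamma_j,\overline{\gamma_j}\}$, and LI forces $\theta(\gamma)\notin\{0,\pi\}$ for every inverse zero; otherwise the set in Definition \ref{def:LI} would contain $0$, or contain $\pi$ with multiplicity two, violating $\mathbb{Q}$-linear independence. After relabeling, we may therefore assume $\theta_j:=\theta(\gamma_j)\in(0,\pi)$ for $j=1,\dots,g$ with $\gamma_{g+j}=\overline{\gamma_j}$. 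Since the expression inside the absolute value defining $a_j$ is conjugated under $\gamma_j\mapsto\overline{\gamma_j}$, the paired coefficients are equal, $a(\overline{\gamma_j})=a(\gamma_j)=:a_j$, while the phases satisfy $\omega(\overline{\gamma_j})=-\omega(\gamma_j)$. Summing the two terms in \eqref{eq:EquationMXWithLI} corresponding to each pair collapses them to
\[
\frac{M_{C/\mathbb{F}_q}(X)}{q^{X/2}}=-\sum_{j=1}^{g}2a_j\cos\bigl(\omega(\gamma_j)+X\theta_j\bigr)+o(1),
\]
so the upper bound $\limsup M_{C/\mathbb{F}_q}(X)/q^{X/2}\le\sum_{j=1}^{g}2a_j=\sum_{j=1}^{2g}a_j$ is immediate from $|\cos|\le1$.

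For the matching lower bound, I would note that LI, as phrased in Definition \ref{def:LI}, is equivalent to $\mathbb{Q}$-linear independence of $\{\theta_1/(2\pi),\dots,\theta_g/(2\pi),1\}$ (multiply through by $2\pi$ and use that $\pi$ and $2\pi$ are $\mathbb{Q}$-proportional). This is precisely the hypothesis of Weyl's equidistribution theorem, so the orbit $\{(X\theta_1,\dots,X\theta_g)\bmod 2\pi:X\in\mathbb{Z}_{>0}\}$ is equidistributed, and in particular dense, in $(\mathbb{R}/2\pi\mathbb{Z})^g$. Choosing a sequence of positive integers $X_k\to\infty$ along which this orbit converges to the target $(\pi-\omega(\gamma_1),\dots,\pi-\omega(\gamma_g))$ sends every cosine in the displayed identity to $\cos\pi=-1$ simultaneously, giving $M_{C/\mathbb{F}_q}(X_k)/q^{X_k/2}\to\sum_{j=1}^{2g}a_j$ and matching the upper bound.

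The only delicate point is the bookkeeping in the translation between Definition \ref{def:LI} and the hypothesis required for Kronecker: the extra element $\pi$ together with the multiplicity convention is exactly what rules out real inverse zeros and any residual $\mathbb{Q}$-relation among the $\theta_j$ modulo $2\pi\mathbb{Z}$, so that the conjugate-pair reduction and the equidistribution step are both legitimate. Once this is in place the argument is routine and parallels both the function-field treatment in \cite{Cha08} and the number-field argument of Ng \cite{Ng04} and Rubinstein--Sarnak \cite{RS94}, which is presumably why the author is content to record it briefly.
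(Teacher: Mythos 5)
Your argument is correct, and you handle the one delicate point properly: Definition \ref{def:LI}, with the multiplicity convention and with $\pi$ adjoined, does exclude repeated and real inverse zeros, so the conjugate-pair reduction of \eqref{eq:EquationMXWithLI} to $g$ terms with coefficients $2a_j$ and the $\mathbb{Q}$-linear independence of $1,\theta_1/(2\pi),\dots,\theta_g/(2\pi)$ needed for Kronecker--Weyl are both legitimate consequences of LI. Your route is, however, genuinely different from the paper's. The paper obtains sharpness through the limiting-distribution machinery: under LI the measure $\mu$ of Theorem \ref{thm:LimitingDistribution} is absolutely continuous with the explicit Fourier transform \eqref{eq:FourierTransform}, a product of factors $J_0(2a_j\xi)$, which identifies $\mu$ as the law of $\sum_{j=1}^{g}2a_j\cos U_j$ with independent uniform phases $U_j$; its support is the full interval $\bigl[-\sum_j 2a_j,\ \sum_j 2a_j\bigr]$ and it assigns positive mass near the right endpoint, so $M_{C/\mathbb{F}_q}(X)/q^{X/2}$ comes within $\epsilon$ of $\sum_{j=1}^{2g}a_j$ for a positive proportion of $X$, which together with the trivial bound of Corollary \ref{thm:MertensConjectureFunctionField} gives the theorem (this is the ``adaptation of Rubinstein--Sarnak'' the paper invokes without writing out details). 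You instead apply Kronecker/Weyl directly to the phase vector $(X\theta_1,\dots,X\theta_g)$ to produce a subsequence $X_k$ along which every cosine tends to $-1$. Both proofs rest on the same equidistribution input coming from LI, but yours is more elementary and self-contained: it needs only density of the orbit in the torus and never mentions the limiting distribution or its Fourier transform. The paper's packaging costs more (and is left to the reader via \cite{RS94} and \cite{Cha08}) but yields strictly more information---the full value distribution of $M_{C/\mathbb{F}_q}(X)/q^{X/2}$ and the fact that near-extremal values occur with positive frequency, not merely along some sparse subsequence---which is in keeping with the Rubinstein--Sarnak framework the paper is adapting.
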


\section{Universal families of hyperelliptic curves}\label{sec:HyperellipticFamily}
We define $\mathcal{H}_{2g+1}$ to be the space of monic polynomials of degree $2g+1$ with distinct roots (see (10.1.18.1) of \cite{KS99}). One can think of $\mathcal{H}_{2g+1}$ as an open subvariety of the affine scheme $\mathbb{A}^{2g+1}$ over $\mathbb{Z}$. In particular, for each $n\ge1$, $\mathcal{H}_{2g+1}(\mathbb{F}_{q^n})$ is the set of monic polynomials $f(x)=a_0 + a_1x + \cdots + a_{2g}x^{2g} + x^{2g+1}$ with coefficients $a_i$ in $\mathbb{F}_{q^n}$ where the discriminant of this polynomial is nonzero. Therefore, each $f\in \mathcal{H}_{2g+1}(\mathbb{F}_{q^n})$ defines a hyperelliptic curve $C_f$ of genus $g$ over $\mathbb{F}_{q^n}$, the nonsingular projective model of the plane curve defined by the equation $y^2 = f(x)$. At this point, it will be convenient to introduce a terminology from \cite{Cha97}. We will say that \emph{most} points of $\mathcal{H}_{2g+1}$ have the property $D= \{D_n\}_{n=1}^{\infty}$ if
\[
\lim_{n\to\infty} 
\frac{\#\{ f\in \mathcal{H}_{2g+1}(\mathbb{F}_{q^n}) \mid 
C_f \text{ satisfies }D_n\} }{\#\mathcal{H}_{2g+1}(\mathbb{F}_{q^n})} 
=1.
\]

\subsection{LI for most curves in $\mathcal{H}_{2g+1}$}
\begin{theorem}[Chavdarov \cite{Cha97}, Kowalski \cite{Kow08}]\label{thm:MostLI} For fixed $q$ and $g$, we have that
\[
\lim_{n\to\infty} 
\frac{\#\{ f\in \mathcal{H}_{2g+1}(\mathbb{F}_{q^n}) \mid C_f \text{ satisfies $\mathrm{LI}$. }\} }{\#\mathcal{H}_{2g+1}(\mathbb{F}_{q^n})} = 1.
\] 
In other words, most points of $\mathcal{H}_{2g+1}$ satisfy $\mathrm{LI}$.
\end{theorem}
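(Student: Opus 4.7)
My plan is to adapt the Chavdarov--Kowalski strategy \cite{Cha97, Kow08} to the full family $\mathcal{H}_{2g+1}$, with the essential new input being a big-monodromy theorem for this family. From Chapter 10 of \cite{KS99} one has that the geometric monodromy group of the lisse sheaf attached to $\mathcal{H}_{2g+1}$ is the full symplectic group $\Sp(2g)$; consequently, by Deligne's equidistribution theorem, the Frobenius conjugacy classes of the curves $C_f$ for $f \in \mathcal{H}_{2g+1}(\mathbb{F}_{q^n})$ become equidistributed in the space of conjugacy classes of $\USp(2g, \mathbb{C})$ with respect to Haar measure as $n \to \infty$.

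Next I would reformulate LI algebraically. Writing $\alpha_j := \gamma_j/\sqrt q$, the functional equation pairs the inverse zeros as $\alpha_{g+j} = \alpha_j^{-1}$, so LI is equivalent to the multiplicative independence of $\alpha_1, \ldots, \alpha_g$ and $-1$ in $\overline{\mathbb{Q}}^{\times}$. A Galois-theoretic argument in the spirit of Kowalski shows that this independence is automatic whenever the splitting-field Galois group $G_f$ of $P_{C_f}(u)$ over $\mathbb{Q}$ equals the full hyperoctahedral Weyl group $W_g := (\mathbb{Z}/2\mathbb{Z})^g \rtimes S_g$ acting on the paired roots: the independent sign-flip symmetries in $W_g$ force any $W_g$-invariant multiplicative relation $\prod \alpha_j^{n_j} = \pm 1$ to be trivial.

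It thus suffices to prove $G_f = W_g$ for most $f \in \mathcal{H}_{2g+1}(\mathbb{F}_{q^n})$ as $n \to \infty$, which is the heart of Chavdarov's sieve. For each maximal proper subgroup $H \subsetneq W_g$, one bounds the density of $f$ with $G_f \subseteq H$ via a Chebotarev-type translation: containment in $H$ constrains the Frobenius cycle structure, and Deligne equidistribution converts this into a measure condition that is strictly less than $1$ by the mass formula for $W_g$. Summing the resulting bounds over the finitely many conjugacy classes of maximal proper subgroups of $W_g$ yields a total bad density tending to zero.

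The main obstacle is technical rather than conceptual: Kowalski's sieve is set up for a one-parameter family, and extending it to the higher-dimensional family $\mathcal{H}_{2g+1}$ requires verifying that the \emph{independence of auxiliary specializations} step --- the backbone of any such sieve --- still goes through in the multi-parameter setting. Fortunately, the combinatorial and character-theoretic core of the argument depends only on the monodromy group, not on the dimension of the parameter space, so once the monodromy input from \cite{KS99} is in hand, Kowalski's argument extends essentially verbatim, with only the combinatorial bookkeeping of conjugacy-class counts in $W_g$ needing adjustment.
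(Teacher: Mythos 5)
Your overall architecture (equidistribution, plus maximal Galois group of the splitting field of $P_{C_f}$, plus a Galois-theoretic deduction of LI) is the same as the paper's, but two steps contain genuine gaps. The crucial one is your claim that $G_f=W_{2g}$ (your $W_g$) by itself forces LI. The sign-flip argument you invoke does not make a putative relation trivial: if $\sum_j n_j\theta_j+m\pi=0$, i.e. $\prod_j\alpha_j^{2n_j}=1$, then applying the element of $W_{2g}$ that inverts $\alpha_k$ and fixes the other pairs and comparing only yields $\alpha_k^{4n_k}=1$, i.e. it reduces the relation to the possibility that some $\theta_k$ is a rational multiple of $\pi$. Excluding normalized zeros that are roots of unity requires an additional input, and this is exactly why the paper's proof (following Proposition 1.1 of \cite{Kow08}) assumes, besides maximality of the Galois group, that the trace $\sum_j\gamma_j$ is nonzero; the first step of the paper's proof supplies this for most $f$ by Deligne's equidistribution theorem, since the trace-zero locus has Haar measure zero in $\USp(2g,\mathbb{C})^{\#}$. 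Your proposal omits this condition entirely, and the claim as you state it is false already in the borderline case $g=1$: a supersingular elliptic curve over $\mathbb{F}_p$ has $P(u)=1+pu^2$ irreducible with maximal Galois group $\mathbb{Z}/2$, yet $\theta=\pi/2$, so LI fails --- precisely the trace-zero situation the paper's first step removes.

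Second, the monodromy input you cite and the mechanism you sketch for ``$G_f=W_{2g}$ for most $f$'' are not the right ones. The paper simply invokes Chavdarov's theorem (Theorem 2.3 of \cite{Cha97}), which is a Chebotarev-type argument over the finite monodromy groups mod $\ell$ and needs the mod-$\ell$ geometric monodromy of $\mathcal{H}_{2g+1}$ to be $\Sp(2g,\mathbb{F}_{\ell})$ for all large $\ell$; this is not a formal consequence of the Zariski-closure statement from Chapter 10 of \cite{KS99} that you quote, and in the paper it is supplied by the unpublished result of J.~K.~Yu, reproved in \cite{Hal08} and \cite{AP07}. Moreover, equidistribution of unitarized Frobenius classes in $\USp(2g,\mathbb{C})^{\#}$ cannot detect whether the splitting field of $P_{C_f}$ over $\mathbb{Q}$ has Galois group inside a proper subgroup of $W_{2g}$: that is an arithmetic condition on $P_{C_f}$ read off from its reductions mod $\ell$, not a Haar-measure condition on the conjugacy class, so your ``Chebotarev-type translation via Deligne equidistribution and the mass formula'' would not go through as described. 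Relatedly, your concern about ``independence of auxiliary specializations'' in a multi-parameter sieve is moot on the paper's route: one does not rerun Kowalski's one-parameter argument, but applies Chavdarov's theorem (or its quantitative refinement, Theorem 6.1 of \cite{Kow06b}, cf.\ Remark \ref{rmk:LI}) directly to $\mathcal{H}_{2g+1}$ once the mod-$\ell$ big monodromy is known, and then runs Kowalski's algebraic argument with both hypotheses (nonzero trace and maximal Galois group) in hand.
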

\begin{proof}
Let $f$ and $C_f$ be as above. Then one can show that, for most points of $\mathcal{H}_{2g+1}$, the sum of inverse zeros of $C_f$ is nonzero. This directly follows from Deligne's equidistribution theorem (Theorem 10.8.2 of \cite{KS99}) because the set of conjugacy classes with zero trace forms a measure zero subset of the space of conjugacy classes $\USp(2g, \mathbb{C})^{\#}$ of $\USp(2g, \mathbb{C})$, with respect to the (direct image of) Haar measure. 

The second step is to apply Chavdarov's theorem (Theorem 2.3 of \cite{Cha97}) which says that, for most points of $\mathcal{H}_{2g+1}$, the Galois group of the splitting field of $P_{C_f}(u)$ is as large as possible, that is, the Galois group is isomorphic to the Weyl group $W_{2g}$ corresponding to the symplectic group $\Sp(2g)$. To apply Chavdarov's theorem, we need to ensure that the mod-$\ell$ geometric monodromy group of $\mathcal{H}_{2g+1}$ is $\Sp(2g, \mathbb{F}_{\ell})$ for all large $\ell$. But this result had been previously obtained by J.~K.~Yu, which is unpublished. More recently, Hall in \cite{Hal08} and, independently, Achter and Pries in \cite{AP07}, proved this result. 

The last step is now to follow Kowalski's argument in \S3 of \cite{Kow08}. 
His proof of Proposition 1.1 in \S3 of \cite{Kow08} can be applied to $\mathcal{H}_{2g+1}$ without any change to show that, if the sum of inverse zeros of $C$ is nonzero and the Galois group of $P_{C_f}(u)$ is as large as possible, then $C$ satisfies LI. This concludes the proof of our theorem that most elements of $\mathcal{H}_{2g+1}$ satisfy LI.
\cite{Cha97}
\end{proof}
\begin{remark}\label{rmk:LI}
The aforementioned Chavdarov's theorem has a quantitatively refined version, which was first proved by Kowalski. See Theorems 6.1 and 6.2 in \cite{Kow06b}, where Kowalski gives a quantitative bound on the number of curves in a family whose zeta functions are either reducible or have splitting fields with strictly smaller Galois groups than the maximum possible one. And, using this result, Kowalski derives a bound of the number of curves which don't satisfy LI in the following one-parameter family of hyperelliptic curves
\[ C_t: y^2 = f(x)(x-t),\]
where $f(x)$ is a monic irreducible polynomial with coefficients in $\mathbb{Z}$ whose discriminant is not divisible by $p$ (Proposition 1.1 of \cite{Kow08}).
In fact, if we assume that $p>2g+1$, then Theorem 6.1 (ii) of \cite{Kow06b} is directly applicable to the family $\mathcal{H}_{2g+1}$, and we can deduce from it that the number $N(\mathcal{H}_{2g+1}(\mathbb{F}_q))$ of curves in $\mathcal{H}_{2g+1}(\mathbb{F}_q)$ such that $P_{C_f}(u)$ is either reducible or has splitting field smaller than $W_{2g}$ satisfies, as $q\to\infty$,
\[
N(\mathcal{H}_{2g+1}(\mathbb{F}_q)) \ll q^{2g - \gamma}(\log q),
\]
for $\gamma := 1/(10g^2+6g+8)$. 
Therefore, the same bound above applies to the number of curves that don't satisfy LI. 
\end{remark}

\subsection{Average over the family}
Let $C$ be a nonsingular projective curve over a finite field $\mathbb{F}$ of characteristic $p>2$. As in \S\ref{sec:Asymptotic}, let $r$ be the maximum order of all inverse zeros of $C/\mathbb{F}$. 
Define
\[ B(C/\mathbb{F}) := \limsup_{X\to\infty} \frac{M_{C/\mathbb{F}}(X)}{{\#\mathbb{F}}^{(X+1)/2}X^{r-1}}\]
Further, we let 
\begin{equation}\label{eq:DefinitionD}
D(C/\mathbb{F}) := \frac1{(\#\mathbb{F})^{1/2}} \sum_{\mathrm{ord}(\gamma) = r}
\left|
\frac{\gamma^r r}{{Z_{C/\mathbb{F}}}^{(r)}(\gamma^{-1})}
\frac{\gamma}{\gamma - 1}
\right|.
\end{equation}
Then, Corollary \ref{thm:MertensConjectureFunctionField} and Theorem \ref{thm:GonekConjectureAnalog} can be summarized by saying that 
\begin{equation}\label{eq:Comparison}
B(C/\mathbb{F}) \le D(C/\mathbb{F}),
\end{equation}
and that the equality holds true if $C$ satisfies LI. In this subsection, we investigate a relationship between an average value of $D(C_f, \mathbb{F}_{q^n})$ for all $f \in \mathcal{H}_{2g+1}(\mathbb{F}_{2g+1})$ and a certain integral over the unitary symplectic group $\USp(2g, \mathbb{C})$. 

To describe this relationship, we start by defining the characteristic polynomial $\mathcal{Z}_U(\theta)$. 
(A typographical note: in the literature, this characteristic polynomial is denoted by $Z_U(\theta)$ or $Z(U, \theta)$. But, we use a calligraphic font in this paper to distinguish it from the zeta function $Z_{C/\mathbb{F}_q}(u)$ of a curve $C$.) 
Let $N$ be a positive integer. 
For a $2N \times 2N$ unitary matrix $U$ and a real number $\theta$, we define the function $\mathcal{Z}_U(\theta)$ by
\begin{equation}\label{eq:ZU}
\mathcal{Z}_U(\theta) := \det(I- U\mathrm{e}^{-\mathrm{i}\theta}) = \prod_{m=1}^{2N} (1-\mathrm{e}^{\mathrm{i}(\theta_m-\theta)}),
\end{equation}
where $\mathrm{e}^{\mathrm{i} \theta_1}, \dots, \mathrm{e}^{\mathrm{i}\theta_{2N}}$ are the eigenvalues of $U$. When $U$ has no repeated eigenvalues, we define
\begin{equation}\label{eq:Varphi}
\varphi(U):= \sum_{j = 1}^{2N}\frac1{|{\mathcal{Z}_U}'(\theta_j)|}.
\end{equation}
Note that $\varphi(U)$ depends only on the conjugacy class of $U$. In this paper, we will be mostly interested in $\varphi(U)$ for a unitary symplectic matrix $U\in \USp(2g, \mathbb{C})$. Then, the function $\varphi(U)$ is continuous and well-defined outside the measure zero subset where $U$ has a repeated eigenvalue.

Next, for a curve $C_f$ over $\mathbb{F}_{q^n}$, we recall that there exists a conjugacy class $\vartheta(C_f/\mathbb{F}_{q^n})$ in the set of all conjugacy classes $\USp(2g, \mathbb{C})^{\#}$ of $\USp(2g, \mathbb{C})$. This is called \emph{the unitarized Frobenius conjugacy class attached to} $C_f/\mathbb{F}_{q^n}$. For its definition, the readers are referred to Chapters 9 and 10 (especially \S9.2 and \S\S10.7.2) of \cite{KS99}. In this paper, it will be sufficient to say that this is the unique conjugacy class with the property that
\begin{equation}\label{eq:UnitarizedFrobenius}
P_{C_f/\mathbb{F}_{q^n}}(u) =
\det(1 - u\,q^{n/2}\vartheta(C_f/\mathbb{F}_{q^n})).
\end{equation}

Finally, we define a \emph{truncated version} of $D(C_f, \mathbb{F}_{q^n})$ using $\varphi(U)$ above. Fix a positive number $T>0$. Then, 
\begin{equation}\label{eq:TruncatedD}
D^T(C_f/\mathbb{F}_{q^n}) := 
\begin{cases}
D(C_f, \mathbb{F}_{q^n}) & \text{ if }\varphi(\vartheta(C_f/\mathbb{F}_{q^n})) \le T,\\
0 &  \text{otherwise.} 
\end{cases}
\end{equation}
Note that the second case in the above definition is used when either $\varphi(\vartheta(C_f/\mathbb{F}_{q^n})) > T$ or $\vartheta(C_f/\mathbb{F}_{q^n})$ has a repeated eigenvalue.
The \emph{truncated average} over $\mathcal{H}_{2g+1}(\mathbb{F}_{q^n})$ is defined by
\begin{equation}\label{eq:TruncatedAverageD}
\overline{D}^T(\mathcal{H}_{2g+1}(\mathbb{F}_{q^n}))
:= 
\frac1{\#\mathcal{H}_{2g+1}(\mathbb{F}_{q^n})}
\sum_{f\in \mathcal{H}_{2g+1}(\mathbb{F}_{q^n})}
D^T(C_f/\mathbb{F}_{q^n}) .
\end{equation}
We are ready to state the main theorem of this section.
\begin{theorem}\label{thm:MainTheorem}
Notations are as above.
\[
\lim_{T\to\infty}
\lim_{n\to\infty}
\overline{D}^T(\mathcal{H}_{2g+1}(\mathbb{F}_{q^n}))
=
\int_{\USp(2g, \mathbb{C})} \varphi(U) \, \mathrm{d}\mu_{\mathrm{Haar}}(U).
\]
Here, $\mathrm{d}\mu_{\mathrm{Haar}}$ is the unique probability Haar measure on $\USp(2g, \mathbb{C})$.
\end{theorem}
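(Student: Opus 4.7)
The plan is to reduce the statement to Deligne's equidistribution theorem (Theorem 10.8.2 of \cite{KS99}) by first rewriting $D(C_f/\mathbb{F}_{q^n})$ as the value of the function $\varphi$ at the unitarized Frobenius class $\vartheta(C_f/\mathbb{F}_{q^n})$, up to an error tending to $0$ as $n \to \infty$, and then letting the truncation parameter $T$ grow.

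\emph{Step 1: Express $D(C_f/\mathbb{F}_{q^n})$ through $\varphi$.} Fix $f$ such that $U := \vartheta(C_f/\mathbb{F}_{q^n})$ has distinct eigenvalues $\mathrm{e}^{\mathrm{i}\theta_1},\dots,\mathrm{e}^{\mathrm{i}\theta_{2g}}$, so that $\gamma_j = q^{n/2}\mathrm{e}^{\mathrm{i}\theta_j}$. From \eqref{eq:RiemannHypothesis}, \eqref{eq:ZU}, and \eqref{eq:UnitarizedFrobenius} we get the identity $P_{C_f/\mathbb{F}_{q^n}}(q^{-n/2}\mathrm{e}^{-\mathrm{i}\theta}) = \mathcal{Z}_U(\theta)$, and differentiating in $\theta$ yields $|P_{C_f/\mathbb{F}_{q^n}}'(\gamma_j^{-1})| = q^{n/2}|\mathcal{Z}_U'(\theta_j)|$. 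Combining this with the relation $Z_{C_f/\mathbb{F}_{q^n}}'(\gamma_j^{-1}) = P_{C_f/\mathbb{F}_{q^n}}'(\gamma_j^{-1})/((1-\gamma_j^{-1})(1-q^n\gamma_j^{-1}))$ and simplifying \eqref{eq:DefinitionD} at $r = 1$ yields
\[
D(C_f/\mathbb{F}_{q^n}) = \sum_{j=1}^{2g} \frac{|q^{-n/2} - \mathrm{e}^{-\mathrm{i}\theta_j}|}{|\mathcal{Z}_U'(\theta_j)|}.
\]
Since $|q^{-n/2} - \mathrm{e}^{-\mathrm{i}\theta_j}| = 1 + O(q^{-n/2})$ uniformly in $\theta_j$, this gives $D(C_f/\mathbb{F}_{q^n}) = \varphi(U) + O\bigl(q^{-n/2}\varphi(U)\bigr)$ with an absolute implied constant.

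\emph{Step 2: Equidistribution at fixed $T$.} Define $\varphi^T(U) := \varphi(U)\mathbf{1}_{\varphi(U) \le T}$, extended by $0$ on the locus of repeated eigenvalues. Step 1 gives $|D^T(C_f/\mathbb{F}_{q^n}) - \varphi^T(\vartheta(C_f/\mathbb{F}_{q^n}))| \le C T q^{-n/2}$, whence
\[
\overline{D}^T(\mathcal{H}_{2g+1}(\mathbb{F}_{q^n})) = \frac{1}{\#\mathcal{H}_{2g+1}(\mathbb{F}_{q^n})} \sum_f \varphi^T(\vartheta(C_f/\mathbb{F}_{q^n})) + O(T q^{-n/2}).
\]
The function $\varphi^T$ is bounded by $T$ and continuous on $\USp(2g,\mathbb{C})^{\#}$ outside the union of the Haar-negligible repeated-eigenvalue locus and the level set $\{\varphi = T\}$; the latter has Haar measure zero for all but countably many $T$, since the pushforward of Haar measure by $\varphi$ can have at most countably many atoms. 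For such good $T$, sandwiching $\varphi^T$ above and below by continuous class functions and applying Theorem 10.8.2 of \cite{KS99} to each yields
\[
\lim_{n\to\infty} \frac{1}{\#\mathcal{H}_{2g+1}(\mathbb{F}_{q^n})} \sum_f \varphi^T(\vartheta(C_f/\mathbb{F}_{q^n})) = \int_{\USp(2g,\mathbb{C})} \varphi^T \, \mathrm{d}\mu_{\mathrm{Haar}},
\]
and the exceptional values of $T$ are handled by bracketing with admissible ones. Thus $\lim_n \overline{D}^T = \int \varphi^T \, \mathrm{d}\mu_{\mathrm{Haar}}$ for every $T > 0$.

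\emph{Step 3 and main obstacle.} Since $0 \le \varphi^T \nearrow \varphi$ pointwise off the repeated-eigenvalue locus (which has Haar measure zero), monotone convergence gives $\lim_{T \to \infty} \int \varphi^T \, \mathrm{d}\mu_{\mathrm{Haar}} = \int \varphi \, \mathrm{d}\mu_{\mathrm{Haar}} = \mathcal{I}(g)$, completing the proof. The main technical point is the failure of $\varphi$ to extend continuously across the repeated-eigenvalue locus, where it blows up; this is precisely why the cutoff in \eqref{eq:TruncatedD} was built into the definition of $\overline{D}^T$, and why the iterated limit in the statement must be taken in the order ``$n \to \infty$, then $T \to \infty$''. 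Even the truncated $\varphi^T$ jumps on $\{\varphi = T\}$, which forces the sandwich argument above in order to extend Deligne's equidistribution theorem (naturally stated for continuous class functions) to the bounded Riemann-integrable class we need.
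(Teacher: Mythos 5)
Your proposal is correct and follows essentially the same route as the paper: rewrite $D(C_f/\mathbb{F}_{q^n})$ as $\varphi(\vartheta(C_f/\mathbb{F}_{q^n}))$ up to an error of size $O(Tq^{-n/2})$ (the paper's \eqref{eq:Rewrite}--\eqref{eq:ResultD}), apply Deligne's equidistribution theorem at fixed truncation $T$, and then let $T\to\infty$. If anything, your Step 2 is more careful than the paper, which applies the equidistribution theorem directly to the discontinuous truncated function without comment; your sandwiching by continuous class functions and the countable-atoms observation for $\{\varphi = T\}$ quietly fill that gap.
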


Using \eqref{eq:ZU} and \eqref{eq:UnitarizedFrobenius}, we can easily deduce that
\begin{equation}\label{eq:ZandP}
\mathcal{Z}_{\vartheta(C_f/\mathbb{F}_{q^n})}(\theta) = P_{C_f/\mathbb{F}_{q^n}}((q^{n/2} \mathrm{e}^{\mathrm{i} \theta})^{-1}),
\end{equation}
for any real $\theta$.
Assume that $C_f$ has only simple inverse zeros and write 
$\gamma_j = q^{n/2}\mathrm{e}^{\mathrm{i} \theta_j}$.
Then, we differentiate the equation \eqref{eq:ZandP} to obtain
\begin{equation}\label{eq:Rewrite}
\frac{\gamma_j}{{Z_{C_f/\mathbb{F}_{q^n}}}'({\gamma_j}^{-1})} \frac{\gamma_j}{\gamma_j - 1} =  \frac{1 - (q^n/\gamma_j)}{\mathrm{i} \,{\mathcal{Z}_{\vartheta(C_f/\mathbb{F}_{q^n})}}'(\theta_j)}.
\end{equation}
Further, assume that $\varphi(\vartheta(C_f/\mathbb{F}_{q^n}))\le T$. Then, we sum \eqref{eq:Rewrite} over $j = 1, \dots, 2g$, and, by setting $r=1$ in \eqref{eq:DefinitionD}, this yields 
\begin{align}\label{eq:ResultD}
D(C_f/\mathbb{F}_{q^n}) 
&=
\frac1{q^{n/2}}
\sum_{j=1}^{2g}
\left|
\frac{1 - (q^n/\gamma_j)}{\mathrm{i} \,{\mathcal{Z}_{\vartheta(C_f/\mathbb{F}_{q^n})}}'(\theta_j)}
\right|
 \notag \\
&=
\sum_{j=1}^{2g}
\left(
\frac1{|{\mathcal{Z}_{\vartheta(C_f/\mathbb{F}_{q^n})}}'(\theta_j)|}
+
\frac1{q^{n/2}}
\frac{A(f, j)}{|{\mathcal{Z}_{\vartheta(C_f/\mathbb{F}_{q^n})}}'(\theta_j)|}
\right), 
\end{align}
where $A(f,j)$ is a constant with $|A(f,j)|\le 1$.
Now, we compute $\overline{D}^T(\mathcal{H}_{2g+1}(\mathbb{F}_{q^n})$ from its definition \eqref{eq:TruncatedAverageD} by adding \eqref{eq:ResultD} over all $f\in \mathcal{H}_{2g+1}$ for those $f$ with
$\varphi(\vartheta(C_f/\mathbb{F}_{q^n}))\le T$. 
As a result,
\begin{multline}\label{eq:PassToLimit}
\overline{D}^T(\mathcal{H}_{2g+1}(\mathbb{F}_{q^n})
=
\frac1{\#\mathcal{H}_{2g+1}(\mathbb{F}_{q^n})}
\sum_{\varphi(\vartheta(C_f/\mathbb{F}_{q^n})) \le T}
\sum_{j=1}^{2g}
\frac1{|{\mathcal{Z}_{\vartheta(C_f/\mathbb{F}_{q^n})}}'(\theta_j)|}
+
\\
\frac1{\#\mathcal{H}_{2g+1}(\mathbb{F}_{q^n})}
\sum_{\varphi(\vartheta(C_f/\mathbb{F}_{q^n})) \le T}
\sum_{j=1}^{2g}
O\left(
\frac1{q^{n/2}}
\frac1{|{\mathcal{Z}_{\vartheta(C_f/\mathbb{F}_{q^n})}}'(\theta_j)|}
\right).
\end{multline}

The next step is to pass $n\to\infty$ in \eqref{eq:PassToLimit} and to apply Deligne's equidistribution theorem, Theorem 10.8.2 of \cite{KS99}. The right side of the first line of \eqref{eq:PassToLimit} then becomes
\[
\lim_{n\to\infty}
\frac1{\#\mathcal{H}_{2g+1}(\mathbb{F}_{q^n})}
\sum_{\varphi(\vartheta(C_f/\mathbb{F}_{q^n})) \le T}
\sum_{j=1}^{2g}
\frac1{|{\mathcal{Z}_{\vartheta(C_f/\mathbb{F}_{q^n})}}'(\theta_j)|}
=
\int_{\varphi \le T}\varphi(U) \, \mathrm{d}\mu_{\mathrm{Haar}}(U).
\]
The second line of \eqref{eq:PassToLimit} converges to zero as $n\to\infty$ due to the $q^{n/2}$ term in the denominator and the convergence of the first line of \eqref{eq:PassToLimit}. In other words, we proved that
\[
\lim_{n\to\infty}
\overline{D}^T(\mathcal{H}_{2g+1}(\mathbb{F}_{q^n}))
=
\int_{\varphi\le T} \varphi(U) \, \mathrm{d}\mu_{\mathrm{Haar}}(U).
\]
The proof of Theorem \ref{thm:MainTheorem} is now completed by letting $T\to\infty$.

\section{Averages of characteristic polynomials on unitary symplectic groups}\label{sec:RMT}
Recall that, for a $2N \times 2N$ unitary matrix $U$ and a real number $\theta$, the function $\mathcal{Z}_U(\theta)$ was defined in \eqref{eq:ZU} by
\[
\mathcal{Z}_U(\theta) := \det(I- U\mathrm{e}^{-\mathrm{i}\theta}) = \prod_{m=1}^{2N} (1-\mathrm{e}^{\mathrm{i}(\theta_m-\theta)}),
\]
where $\mathrm{e}^{\mathrm{i} \theta_1}, \dots, \mathrm{e}^{\mathrm{i}\theta_{2N}}$ are the eigenvalues of $U$. Also, the function $\varphi(U)$ is defined in \eqref{eq:Varphi} by
\[
\varphi(U):= \sum_{j = 1}^{2N}\frac1{|{\mathcal{Z}_U}'(\theta_j)|},
\]
whenever $U$ has no repeated eigenvalues. When $U$ is an element in $\USp(2N, \mathbb{C})$, its eigenangles $\theta_1, \dots, \theta_{2N}$  come in complex conjugate pairs. And we will enumerate them in the way that $0\le \theta_j \le \pi$ for $j = 1, \dots, N$ and $\theta_{N+1} = -\theta_1, \dots, \theta_{2N} = -\theta_N$. 
The main theorem of this section is Theorem \ref{thm:MomentThm}, where we give an asymptotic formula of the $2s$-th moments, for any positive real number $s$, of $\mathcal{Z}_U(\theta)$ in $\USp(2N, \mathbb{C})$ using a recent work \cite{DIK11} of Deift, Its, and Krasovsky. 

\begin{theorem}[cf.~Theorem 5 in \cite{KO08}]\label{thm:MomentThm}
Fix a real number $s>0$ (not necessarily an integer) and $\theta$ with $0< \theta <\pi$. As $N\to\infty$,
\[
\int_{\USp(2N, \mathbb{C})} |\mathcal{Z}_U(\theta)|^{2s} \, \mathrm{d}\mu_{\mathrm{Haar}}(U)
\sim
N^{(s^2)}2^{-s} (\sin\theta)^{-s(s+1)}\frac{G(1+s)^2}{G(1+2s)}.
\]
Here, $G(z)$ is the Barnes $G$-function.
\end{theorem}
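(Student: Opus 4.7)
The plan is to reduce the integral, via Weyl's integration formula, to a Hankel determinant with a Fisher--Hartwig singular weight on $[-1,1]$, and then invoke the asymptotic formula of Deift, Its, and Krasovsky \cite{DIK11}. For $\USp(2N,\mathbb{C})$, the eigenvalues of $U$ come in conjugate pairs $\mathrm{e}^{\pm \mathrm{i}\theta_j}$ with $\theta_j \in [0,\pi]$, and the Weyl density on $[0,\pi]^N$ is
\[
\frac{2^{N^2}}{N!\,\pi^N}\prod_{j<k}(\cos\theta_j - \cos\theta_k)^2 \prod_j \sin^2\theta_j.
\]
Pairing up conjugate eigenvalues and using $|1-\mathrm{e}^{\mathrm{i}\alpha}|^2 = 4\sin^2(\alpha/2)$ together with the product-to-sum identity $4\sin\!\bigl(\tfrac{\theta_j - \theta}{2}\bigr)\sin\!\bigl(\tfrac{\theta_j + \theta}{2}\bigr) = 2(\cos\theta - \cos\theta_j)$ gives
\[
|\mathcal{Z}_U(\theta)|^{2s} = 4^{Ns}\prod_{j=1}^N|\cos\theta - \cos\theta_j|^{2s}.
\]

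Next I would change variables to $x_j := \cos\theta_j$, writing $a := \cos\theta$ and $\sin^2\theta_j\,\mathrm{d}\theta_j = (1-x_j^2)^{1/2}\,\mathrm{d}x_j$. The moment integral becomes, up to the explicit constant $\tfrac{2^{N^2 + 2Ns}}{N!\,\pi^N}$, the partition function
\[
\frac{1}{N!}\int_{[-1,1]^N}\prod_{j<k}(x_j - x_k)^2 \prod_j (1-x_j^2)^{1/2}|a-x_j|^{2s}\, \mathrm{d}x_j,
\]
which by Heine's identity equals the Hankel determinant $D_N[w]$ of the weight $w(x) = (1-x^2)^{1/2}|a-x|^{2s}$ on $[-1,1]$. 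This is a Chebyshev--U Jacobi weight dressed by a single interior Fisher--Hartwig singularity of exponent $s$ at $a\in(-1,1)$. The asymptotic behavior of such Hankel determinants is precisely what Deift--Its--Krasovsky \cite{DIK11} treat; applied here it gives, as $N\to\infty$,
\[
D_N[w] \sim D_N[w_0] \cdot \bigl(\pi N \rho_{\mathrm{eq}}(a)\bigr)^{s^2}\cdot \frac{G(1+s)^2}{G(1+2s)},
\]
where $w_0(x)=(1-x^2)^{1/2}$ and $\rho_{\mathrm{eq}}$ is the equilibrium density for $w_0$, namely the arcsine density $\rho_{\mathrm{eq}}(a) = 1/(\pi\sin\theta)$. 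The pure Jacobi partition function $D_N[w_0]$ is a classical Selberg integral with a known closed form.

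The final step is to collect constants. The Selberg evaluation of $D_N[w_0]$, together with the Weyl/absorption prefactor $\tfrac{2^{N^2 + 2Ns}}{N!\pi^N}$, produces, after cancellation of all exponential-in-$N$ factors, the residual constant $2^{-s}$. The DIK universal local factor $(\pi\rho_{\mathrm{eq}}(a))^{s^2} = (\sin\theta)^{-s^2}$, combined with an additional $(\sin\theta)^{-s}$ coming from the smooth $w_0(a)^{s}$ factor that sits multiplicatively alongside the Fisher--Hartwig singularity in DIK's local parametrix, combines to give the trigonometric dependence $(\sin\theta)^{-s(s+1)}$. Together with the universal $N^{s^2}$ scaling and the Barnes factor $G(1+s)^2/G(1+2s)$, one then recovers exactly the asserted asymptotic. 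The main obstacle is this bookkeeping of constants: the DIK theorem is typically stated in a normalization (generic Jacobi weights $(1-x)^\alpha(1+x)^\beta$ on $[-1,1]$ with Fisher--Hartwig insertions) that must be carefully matched against our Chebyshev--U setup and the Weyl prefactor, in order to see that every non-trigonometric constant collapses to $2^{-s}$ and every power of $\sin\theta$ assembles correctly into $(\sin\theta)^{-s(s+1)}$. As the introduction remarks, only the leading-order $\sim$ statement is accessible without a finer control of the DIK error term, and that is precisely what the theorem asserts.
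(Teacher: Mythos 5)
Your reduction is exactly the paper's: the trigonometric pairing $|\mathcal{Z}_U(\theta)|=2^N\prod_j|\cos\theta_j-\cos\theta|$, the Weyl integration formula, the substitution $x_j=\cos\theta_j$ producing the weight $w(x)=\sqrt{1-x^2}\,|x-a|^{2s}$ on $[-1,1]$, Andr\'eief/Heine to pass to a Hankel determinant, and then Deift--Its--Krasovsky. The gap is in the one step you defer, namely the precise form of the DIK asymptotics, and as you state it the formula is wrong in a way that breaks the whole cancellation. For a Fisher--Hartwig singularity at an interior point of $[-1,1]$ the ratio $D_N[w]/D_N[w_0]$ is \emph{not} just $(\pi N\rho_{\mathrm{eq}}(a))^{s^2}G(1+s)^2/G(1+2s)$: unlike the unit-circle case, the logarithmic mean of $|x-a|$ against the arcsine measure is nonzero, and the asymptotics carry the exponential factor
\[
\exp\Bigl(2sN\int_{-1}^{1}\log|a-x|\,\frac{\mathrm{d}x}{\pi\sqrt{1-x^2}}\Bigr)=2^{-2sN},
\]
together with an $N$-independent constant. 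This $2^{-2sN}$ is precisely what must cancel the $4^{Ns}$ you absorbed from $|\mathcal{Z}_U(\theta)|^{2s}$; with your displayed ratio, Selberg's evaluation $D_N[w_0]=\pi^N/2^{N^2}$ only cancels $2^{N^2}/\pi^N$, the factor $2^{2sN}$ survives, and the moment comes out exponentially too large. Likewise the residual constant $2^{-s}$ does not come from the Weyl/Selberg bookkeeping at all but from the constant term in the DIK formula, and your attribution of the extra $(\sin\theta)^{-s}$ to ``the smooth factor $w_0(a)^{s}$'' has the wrong sign (that would give $(\sin\theta)^{+s}$); the correct output of DIK is $(1-a^2)^{-(s^2+s)/2}$ in one piece. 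In the paper this is exactly where the work happens: Theorem 1.20 of \cite{DIK11} is quoted with the explicit parameters $V\equiv0$, $\alpha_0=\alpha_2=1/4$, $\alpha_1=s$, $\lambda_1=y$, $\beta_i=0$, and the resulting expression already contains the $4^{-(sN+\cdots)}$ and the $2^{-s}$ that make the exponential factors collapse.

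Two smaller bookkeeping points. You double-count $1/N!$: the Weyl prefactor $\tfrac{2^{N^2+2Ns}}{N!\pi^N}$ already contains it, so after Andr\'eief the moment equals $\tfrac{2^{N^2+2sN}}{\pi^N}$ times the Hankel determinant, not $\tfrac{2^{N^2+2sN}}{N!\pi^N}$ times the partition function $\tfrac1{N!}\int\Delta^2\prod w$. Finally, note that saying ``the main obstacle is the bookkeeping'' and stopping there leaves unproved exactly the content of the theorem; the statement to be established is the constant and the $\theta$-dependence, so the matching against DIK's normalization must actually be carried out, as the paper does.
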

\begin{proof}
During the proof, we use the notation
\begin{equation}\label{eq:DefTwoKMoment}
\langle |\mathcal{Z}_U(\theta)|^{2s} \, \rangle_{\USp(2N, \mathbb{C})}:= 
\int_{\USp(2N, \mathbb{C})} |\mathcal{Z}_U(\theta)|^{2s} \, \mathrm{d}\mu_{\mathrm{Haar}}(U).
\end{equation}
We first rewrite 
\begin{equation}\label{eq:TrigIdentity2}
|\mathcal{Z}_U(\theta)| = 
\prod_{j = 1}^{N}|1 - \mathrm{e}^{\mathrm{i}(\theta_j - \theta)}||1 - \mathrm{e}^{\mathrm{i}(\theta_j + \theta)}|
=2^N \prod_{j=1}^N |\cos\theta_j - \cos\theta|.
\end{equation}
This can be done by applying the following straightforward trigonometric identity
\begin{equation}\label{eq:TrigIdentity}
\left|1 - \mathrm{e}^{\mathrm{i}(\theta_j - \theta_k)}\right|\left|1 - \mathrm{e}^{\mathrm{i}(\theta_j + \theta_k)}\right| = 2|\cos\theta_j - \cos\theta_k|,
\end{equation}
to \eqref{eq:ZU}. To integrate \eqref{eq:TrigIdentity2}, we use the Weyl integration formula, which describes the Haar measures on classical matrix groups explicitly in terms of eigenangles. The version we use here is (5.0.4) of \cite{KS99}, which we recall below. Define a measure $\mu(\USp(2N))$ on $[0, \pi]^N$ by
\begin{equation}\label{eq:HaarOnUSp}
\mathrm{d}\mu(\USp(2N)) := \frac{2^{N^2}}{N!\pi^N} \prod_{1\le j<k \le N}(\cos\theta_j - \cos\theta_k)^2 \prod_{j=1}^N \sin^2\theta_j \prod_{j=1}^N \mathrm{d}\theta_j,
\end{equation}
where $\mathrm{d}\theta_1, \dots, \mathrm{d}\theta_N$ are the usual Lebesgue measure on the set $[0, \pi]$. Then, the Weyl integration formula says that, for a bounded, Borel measurable $\mathbb{R}$-valued central functions $g$ on $\USp(2N, \mathbb{C})$, we have
\begin{equation}\label{eq:WeylIntegration}
\int_{\USp(2N, \mathbb{C})} g(U)\, \mathrm{d}\mu_{\mathrm{Haar}}(U) = \int_{[0, \pi]^N} \tilde{g}(\theta_1, \dots, \theta_N) \, \mathrm{d}\mu(\USp(2N)).
\end{equation}
Here, $\tilde{g}$ is the function on $[0, \pi]^N$ defined by the property
\[ \tilde{g}(\theta_1, \dots, \theta_N) = g(U),\]
whenever $\theta_1, \dots, \theta_N, -\theta_1, \dots, -\theta_N$ are the eigenangles of $U\in \USp(2N, \mathbb{C})$. Now, from \eqref{eq:DefTwoKMoment}, \eqref{eq:TrigIdentity2}, \eqref{eq:HaarOnUSp} and \eqref{eq:WeylIntegration}, 
we have
\begin{multline*}
\langle|\mathcal{Z}_U(\theta)|^{2s}\,\rangle_{\USp(2N, \mathbb{C})}  =
\frac{2^{N^2+2sN}}{N!\pi^N} \int_{[0,\pi]^N} 
\prod_{1\le j<k \le N}(\cos\theta_j - \cos\theta_k)^2 \\
\times
\prod_{j=1}^N |\cos\theta_j-\cos\theta|^{2s} \sin^2\theta_j \, \mathrm{d}\theta_j.
\end{multline*}
Note that the expression $\prod_{1\le j<k \le N}(\cos\theta_k - \cos\theta_j)$ is the same as the Vandermonde determinant $\Delta(\cos\theta_1, \dots, \cos\theta_j)$ where 
\[
\Delta(x_1, \dots, x_N):= \det({x_i}^{j-1})_{1\le i, j\le N}.
\]
Set $y=\cos\theta$. Then, clearly, $0<y<1$. Also, we use the change of variables $x_j = \cos\theta_j$ to obtain
\begin{equation}\label{eq:Integral1}
\langle|\mathcal{Z}_U(\theta)|^{2s}\,\rangle_{\USp(2N, \mathbb{C})}  =
\frac{2^{N^2+2sN}}{N!\pi^N} \int_{[-1,1]^N} 
\Delta(x_1, \dots, x_N)^2 \, \prod_{j=1}^N w_y(x_j)\, \mathrm{d} x_j.
\end{equation}
Here, the \emph{weight function} $w_y(x)$ is defined by
\[
w_y(x):= |x-y|^{2s}\sqrt{1-x^2}.
\]
Now, we will use the following \emph{Andr\'eief's identity}, 
\begin{multline}\label{eq:CauchyBinet}
\frac1{N!}\int_{X^n}
\det[f_j(x_k)]_{1\le j, k\le N}\det[g_j(x_k)]_{1\le j, k\le N}
\prod_{j=1}^N w(x_j)\, \mathrm{d} x_j
=\\
\det\left[\int_X f_j(x)g_k(x) \, w(x)\, \mathrm{d} x\right]_{1\le j, k\le N},
\end{multline}
for any interval $X$ in $\mathbb{R}$. Setting $f_j(x) = g_j(x) = x^{j-1}$ and $X=[-1, 1]$ in the Andr\'eief's identity, we can rewrite the integral \eqref{eq:Integral1} as
\begin{equation}\label{eq:Integral2}
\langle|\mathcal{Z}_U(\theta)|^{2s}\,\rangle_{\USp(2N, \mathbb{C})}  =
\frac{2^{N^2+2sN}}{\pi^N} 
\det\left[
\int_{-1}^1 x^{j+k} w_y(x) \, \mathrm{d} x
\right]_{0 \le j, k \le N-1}.
\end{equation}
The determinant in \eqref{eq:Integral2} is, so-called, a determinant of Hankel's type \cite{Sze75} with weight $w_y(x)$. And what we need is an asymptotic expression of Hankel's determinant when the weight function is not differentiable. We use a recent result in \cite{DIK11} of Deift, Its, and Krasovsky, which can be applied to much more general weight functions than our $w_y(x)$. In particular, Theorem 1.20 of \cite{DIK11}, with the following parameters, 
\[
V\equiv 0, \quad b_{\pm}\equiv 1, \quad 
\begin{cases}
\alpha_0 = 1/4, \\
\alpha_1 = s, \\
\alpha_2 = 1/4,
\end{cases} \quad
\begin{cases}
\lambda_0 = 1, \\
\lambda_1 = y, \\
\lambda_2 = -1,
\end{cases}
\quad \text{and} \quad \beta_i = 0 \text{ for all $i$,}
\]
gives 
\begin{multline}\label{eq:Deift}
\det\left[
\int_{-1}^1 x^{j+k} w_y(x) \, \mathrm{d} x
\right]_{0 \le j, k \le N-1} =
4^{-(sN + \frac{N}2 +\frac{s}2 + \frac3{16})} 
(2\pi)^{1/2} N^{s^2 +\frac{1}{4}} \\
\times
2^{-1/8}|1-y^2|^{-s/2}\, G({\textstyle{\frac32}})^{-2}
(1-y^2)^{-s^2/2} \frac{G(1+s)^2}{G(1+2s)} \frac{\pi^{N+\frac12}G({\textstyle{\frac12}})^2}{2^{N(N-1)}N^{1/4}}
(1+o(1)),
\end{multline}
as $N\to\infty$. Further simplification yields
\begin{multline*}
\det\left[
\int_{-1}^1 x^{j+k} w_y(x) \, \mathrm{d} x
\right]_{0 \le j, k \le N-1} =\\
\frac{\pi^N}{2^{N^2 + 2sN}}
2^{-s}N^{s^2}(1-y^2)^{-(s^2+s)/2} \frac{G(1+s)^2}{G(1+2s)}(1+o(1)).
\end{multline*}
Combining this with \eqref{eq:Integral2} (and remembering $y=\cos\theta$), we finish the proof of the theorem.
\end{proof}

In view of Theorem \ref{thm:MainTheorem}, it would be desirable to obtain an asymptotic expression of 
\begin{equation}\label{eq:IG}
\mathcal{I}(N):= \int_{\USp(2N, \mathbb{C})} \varphi(U) \, \mathrm{d}\mu_{\mathrm{Haar}}(U).
\end{equation}
as $N\to\infty$, because this would be then thought of as a function field analog (for the family $\mathcal{H}_{2g+1}$) of the $(\log\log\log x)^{5/4}$ term in \eqref{eq:GonekConjecture}. Using Theorem \ref{thm:MomentThm} (for the case $s =1/2$), we present some evidence in support of the formula
\begin{equation}\label{eq:AnalogOfB}
\mathcal{I}(N) \sim
\sqrt2 G({\textstyle{\frac12}})^2B({\textstyle{\frac58, \frac12}})N^{1/4}.
\end{equation}
Here, $G(z)$ is the Barnes $G$-function and $B(x,y)$ is the beta function
\[ 
B(x, y) = \int_0^1t^{x-1}(1-t)^{y-1}\, \mathrm{d} t.
\]
The rest of the paper is now devoted to presenting the argument in support of the formula \eqref{eq:AnalogOfB}. 
Our computation closely follows the strategy used by Hughes, Keating, and O'Connell in the proof of Theorem 1.2 in \cite{HKO00}.

A straightforward differentiation of $\mathcal{Z}_U(\theta)$ in \eqref{eq:ZU} gives
\begin{equation}\label{eq:ZU1}
|{\mathcal{Z}_U}'(\theta_j)| = |1 - \mathrm{e}^{2\mathrm{i}\theta_j}| \, \prod_{\substack{k = 1\\ k\neq j}}^{N}|1 - \mathrm{e}^{\mathrm{i}(\theta_j - \theta_k)}||1 - \mathrm{e}^{\mathrm{i}(\theta_j + \theta_k)}|,
\end{equation}
for $j = 1, \dots, N$.
Using \eqref{eq:TrigIdentity} and another easy trigonometric identity
\[
|1-\mathrm{e}^{2\mathrm{i}\theta_j}| = 2|\sin\theta_j|,
\]
one easily deduces from \eqref{eq:ZU1} that
\[
|{\mathcal{Z}_U}'(\theta_j)|  = 2^N|\sin\theta_j| \prod_{\substack{k = 1\\ k\neq j}}^{N}|\cos\theta_j - \cos\theta_k|.
\]
Also, obviously, $|{\mathcal{Z}_U}(\theta_{N+j})|=|{\mathcal{Z}_U}(-\theta_j)| = |{\mathcal{Z}_U}(\theta_j)|$ for $j = 1, \dots, N$. Hence,
\begin{equation}\label{eq:FUsimplified}
\varphi(U) = \sum_{m = 1}^{2N} |{\mathcal{Z}_U}'(\theta_m)|^{-1} =  2^{1-N}
\sum_{j=1}^N |\sin\theta_j|^{-1} \prod_{\substack{k=1\\ k\neq j}}^{N}|\cos\theta_j-\cos\theta_k|^{-1}.
\end{equation}

To integrate $\varphi(U)$ over $\USp(2N, \mathbb{C})$, we use the Weyl integration formula again. That is, from \eqref{eq:FUsimplified}, \eqref{eq:HaarOnUSp} and \eqref{eq:WeylIntegration}, 
\begin{multline*}
\mathcal{I}(N) = \int_{\USp(2N, \mathbb{C})}\varphi(U)\, \mathrm{d}\mu_{\mathrm{Haar}}(U) \\
= \frac{2^{N^2-N+1}}{N!\pi^N} \int_{[0,\pi]^N} 
\left[
\sum_{j=1}^N |\sin\theta_j|^{-1} \prod_{\substack{k=1\\ k\neq j}}^{N}|\cos\theta_j-\cos\theta_k|^{-1}
\right] \\
\times \prod_{1\le j<k \le N}(\cos\theta_j - \cos\theta_k)^2 \prod_{j=1}^N \sin^2\theta_j \prod_{j=1}^N \mathrm{d}\theta_j.
\end{multline*}
The expression in the square bracket inside of the above integral is symmetric in $\theta_j$'s, therefore, we can replace the summation on $j$ by $N$ times any single summand, say, the $j=N$ term. Then we proceed
\begin{multline}\label{eq:FUintegralFull}
\mathcal{I}(N) 
= \frac{2^{N^2-N+1}}{(N-1)!\pi^N} \int_{[0,\pi]^N} 
\left[
|\sin\theta_N|^{-1} \prod_{k=1}^{N-1}|\cos\theta_N-\cos\theta_k|^{-1}
\right] \\
\times \prod_{1\le j<k \le N}(\cos\theta_j - \cos\theta_k)^2 \prod_{j=1}^N \sin^2\theta_j \prod_{j=1}^N \mathrm{d}\theta_j\\
= \frac{2^{N^2-N+1}}{(N-1)!\pi^N} \int_{[0,\pi]^N} 
\left[
|\sin\theta_N|\prod_{k=1}^{N-1}|\cos\theta_N-\cos\theta_k|
\right] \\
\times \prod_{1\le j<k \le N-1}(\cos\theta_j - \cos\theta_k)^2 \prod_{j=1}^{N-1} \sin^2\theta_j \prod_{j=1}^N \mathrm{d}\theta_j.
\end{multline}
Again, \eqref{eq:TrigIdentity} can be used to rewrite the expression in the square bracket after the last equality in \eqref{eq:FUintegralFull} as
\begin{align*} 
|\sin\theta_N|\prod_{k=1}^{N-1}|\cos\theta_N-\cos\theta_k|
&= |\sin\theta_N| \prod_{k=1}^{N-1}
\frac12 \left|1 - \mathrm{e}^{\mathrm{i}(\theta_k - \theta_N)}\right|\left|1 - \mathrm{e}^{\mathrm{i}(\theta_k + \theta_N)}\right|\\
&= 2^{1-N}
|\sin\theta_N| |\mathcal{Z}_U(\theta_N)|,
\end{align*}
where $U$ is an element in $\USp(2(N-1))$ whose eigenangles are $\pm\theta_1, \dots, \pm\theta_{N-1}$. So, we continue from \eqref{eq:FUintegralFull} to get
\begin{multline}\label{eq:FUintegralFull2}
\mathcal{I}(N)
= \frac{2^{(N-1)^2}}{(N-1)!\pi^N} \int_{[0,\pi]^N} 
2|\sin\theta_N| |\mathcal{Z}_U(\theta_N)|
\\
\times \prod_{1\le j<k \le N-1}(\cos\theta_j - \cos\theta_k)^2 \prod_{j=1}^{N-1} \sin^2\theta_j \prod_{j=1}^N \mathrm{d}\theta_j \\
= 
\frac{2}{\pi}
\int_{[0, \pi]} \sin\theta_N
\left(
\int_{\USp(2(N-1))}
|\mathcal{Z}_U(\theta_N)|\,
\mathrm{d}\mu_{\mathrm{Haar}}(U)
\right)
\mathrm{d}\theta_N,
\end{multline}
where the last equality is again from the Weyl integration formula \eqref{eq:HaarOnUSp} and \eqref{eq:WeylIntegration}, applied to $\USp(2(N-1))$. The integral in the last line of \eqref{eq:FUintegralFull2} within the parenthesis is precisely the integral 
$\langle|\mathcal{Z}_U(\theta_N)|^{2s}\,\rangle_{\USp(2(N-1))} $, with $s = 1/2$, whose asymptotic expression is found in Theorem \ref{thm:MomentThm}.
Therefore, after some simplification, we find that
\begin{equation}\label{eq:Exchange}
\mathcal{I}(N)=
\sqrt2 G({\textstyle\frac12})^2N^{1/4}\int_0^{\pi} (\sin\theta)^{1/4}(1+o(1))\, \mathrm{d}\theta,
\end{equation}
as $N\to\infty$. 

\begin{remark}\label{rmk:Error}
In order to prove the asymptotic expression \eqref{eq:AnalogOfB}, we let $N\to\infty$ in \eqref{eq:Exchange}. 
Then, if we can exchange the limit and the integral, then the integral of $(\sin\theta)^{1/4}$ is expressed in terms of the beta function and then this would finish the proof of \eqref{eq:AnalogOfB}.
The key step here, therefore, is to estimate the size of $o(1)$-term in \eqref{eq:Exchange} with respect to $\theta$. 

This error term comes from the formula of Deift, Its, and Krasovsky, quoted in \eqref{eq:Deift}. See Remark 1.6 of \cite{DIK11} for some general discussion on the size of their error term. 
Let $\epsilon_N(\theta)$ be the $o(1)$-term in \eqref{eq:Exchange}. If one can show that the expression $(\sin\theta)^{1/4}\epsilon_N(\theta)$ is bounded by a function in $L^1([0, \pi])$ independently of $N$, then the dominated convergence theorem can be used to justify the exchange of limit and integral.

In fact, we can show that $\epsilon_N(\theta)$ does \emph{not} tend to zero uniformly in $\theta$ as follows. Define
\[
f_N(\theta):=
\int_{\USp(2N, \mathbb{C})} |\mathcal{Z}_U(\theta)|\, \mathrm{d}\mu_{\mathrm{Haar}}(U).
\]
Then, it is known that
\[ f_N(0) \sim N,\]
from a result in \cite{KS00b}. (Or, alternatively, one can use the same formula of Deift, Its, and Krasovsky and proceed exactly as in the proof of Theorem \ref{thm:MomentThm}.)
If we assume that the error term in Theorem \ref{thm:MomentThm} is bounded uniformly in $\theta$, 
we can pick $N$ large enough, so that
$f_N(\theta)$ is about (a constant times) $N^{1/4}(\sin\theta)^{-3/4}$, for all $\theta$ close to 0. Now, if we now choose $\theta$ in the range $0< \theta < 1/N^2$, then 
this gives a contradiction to the continuity of $f_N(\theta)$ at $\theta=0$. 
So, this implies that $\epsilon_N(\theta)$ does not tend to 0 uniformly in $\theta$ as $N\to\infty$. Therefore, further investigation on $\epsilon_N(\theta)$ is warranted to justify the exchange of limit and integral in \eqref{eq:Exchange}. 
\end{remark}

\begin{bibdiv} 
\begin{biblist} 
\bib{AP07}{article}{
  author={Achter, Jeffrey D.},
  author={Pries, Rachel},
  title={The integral monodromy of hyperelliptic and trielliptic curves},
  journal={Math. Ann.},
  volume={338},
  date={2007},
  number={1},
  pages={187--206},
  issn={0025-5831},
  review={\MR {2295509 (2008e:11072)}},
  doi={10.1007/s00208-006-0072-0},
}

\bib{Apo76}{book}{
  author={Apostol, Tom M.},
  title={Introduction to analytic number theory},
  note={Undergraduate Texts in Mathematics},
  publisher={Springer-Verlag},
  place={New York},
  date={1976},
  pages={xii+338},
  review={\MR {0434929 (55 \#7892)}},
}

\bib{Cha08}{article}{
  author={Cha, Byungchul},
  title={Chebyshev's bias in function fields},
  journal={Compos. Math.},
  volume={144},
  date={2008},
  number={6},
  pages={1351--1374},
  issn={0010-437X},
  review={\MR {2474313}},
}

\bib{Cha97}{article}{
  author={Chavdarov, Nick},
  title={The generic irreducibility of the numerator of the zeta function in a family of curves with large monodromy},
  journal={Duke Math. J.},
  volume={87},
  date={1997},
  number={1},
  pages={151--180},
  issn={0012-7094},
  review={\MR {1440067 (99d:11071)}},
  doi={10.1215/S0012-7094-97-08707-X},
}

\bib{CI11}{article}{
  author={Cha, Byungchul},
  author={Im, Bo-Hae},
  title={Chebyshev's bias in Galois extensions of global function fields},
  journal={J. Number Theory},
  volume={131},
  date={2011},
  number={10},
  pages={1875--1886},
  issn={0022-314X},
  review={\MR {2811555}},
  doi={10.1016/j.jnt.2011.03.011},
}

\bib{DIK11}{article}{
  author={Deift, P.},
  author={Its, A.},
  author={Krasovsky, I.},
  title={Asymptotics of Toeplitz, Hankel, and Toeplitz+Hankel determinants with Fisher-Hartwig singularities},
  journal={Ann. of Math. (2)},
  volume={174},
  number={2},
  date={2011},
  doi={10.4007/annals.2011.174.2.12},
  pages={1243--1299},
}

\bib{Hal08}{article}{
  author={Hall, Chris},
  title={Big symplectic or orthogonal monodromy modulo $l$},
  journal={Duke Math. J.},
  volume={141},
  date={2008},
  number={1},
  pages={179--203},
  issn={0012-7094},
  review={\MR {2372151 (2008m:11112)}},
  doi={10.1215/S0012-7094-08-14115-8},
}

\bib{HKO00}{article}{
  author={Hughes, C. P.},
  author={Keating, J. P.},
  author={O'Connell, Neil},
  title={Random matrix theory and the derivative of the Riemann zeta function},
  journal={R. Soc. Lond. Proc. Ser. A Math. Phys. Eng. Sci.},
  volume={456},
  date={2000},
  number={2003},
  pages={2611--2627},
  issn={1364-5021},
  review={\MR {1799857 (2002e:11117)}},
}

\bib{Ing42}{article}{
  author={Ingham, A. E.},
  title={On two conjectures in the theory of numbers},
  journal={Amer. J. Math.},
  volume={64},
  date={1942},
  pages={313--319},
  issn={0002-9327},
  review={\MR {0006202 (3,271c)}},
}

\bib{Kow06b}{article}{
  author={Kowalski, E.},
  title={The large sieve, monodromy and zeta functions of curves},
  journal={J. Reine Angew. Math.},
  volume={601},
  date={2006},
  pages={29--69},
  issn={0075-4102},
  review={\MR {2289204 (2008b:14030)}},
  doi={10.1515/CRELLE.2006.094},
}

\bib{Kow08}{article}{
  author={Kowalski, Emmanuel},
  title={The large sieve, monodromy, and zeta functions of algebraic curves. II. Independence of the zeros},
  journal={Int. Math. Res. Not. IMRN},
  date={2008},
  pages={Art. ID rnn 091, 57},
  issn={1073-7928},
  review={\MR {2439552 (2009i:14023)}},
}

\bib{KS99}{book}{
  author={Katz, Nicholas M.},
  author={Sarnak, Peter},
  title={Random matrices, Frobenius eigenvalues, and monodromy},
  series={American Mathematical Society Colloquium Publications},
  volume={45},
  publisher={American Mathematical Society},
  place={Providence, RI},
  date={1999},
  pages={xii+419},
  isbn={0-8218-1017-0},
  review={\MR {1659828 (2000b:11070)}},
}

\bib{KO08}{article}{
  author={Keating, J. P.},
  author={Odgers, B. E.},
  title={Symmetry transitions in random matrix theory \& $L$-functions},
  journal={Comm. Math. Phys.},
  volume={281},
  date={2008},
  number={2},
  pages={499--528},
  issn={0010-3616},
  review={\MR {2410904 (2009b:11158)}},
  doi={10.1007/s00220-008-0483-8},
}

\bib{KS00b}{article}{
  author={Keating, J. P.},
  author={Snaith, N. C.},
  title={Random matrix theory and $L$-functions at $s=1/2$},
  journal={Comm. Math. Phys.},
  volume={214},
  date={2000},
  number={1},
  pages={91--110},
  issn={0010-3616},
  review={\MR {1794267 (2002c:11108)}},
  doi={10.1007/s002200000262},
}

\bib{Mer97}{article}{
  author={Mertens, F.},
  title={Uber eine zahlentheoretische Funktion},
  journal={Sitzungsberichte Akad. Wien 106, Abt. 2a},
  date={1897},
  pages={761--830},
}

\bib{MM09}{article}{
  author={Maier, H.},
  author={Montgomery, H. L.},
  title={The sum of the M\"obius function},
  journal={Bull. Lond. Math. Soc.},
  volume={41},
  date={2009},
  number={2},
  pages={213--226},
  issn={0024-6093},
  review={\MR {2496499 (2010b:11131)}},
  doi={10.1112/blms/bdn119},
}

\bib{Ng04}{article}{
  author={Ng, Nathan},
  title={The distribution of the summatory function of the M\"obius function},
  journal={Proc. London Math. Soc. (3)},
  volume={89},
  date={2004},
  number={2},
  pages={361--389},
  issn={0024-6115},
  review={\MR {2078705 (2005f:11215)}},
}

\bib{OtR85}{article}{
  author={Odlyzko, A. M.},
  author={te Riele, H. J. J.},
  title={Disproof of the Mertens conjecture},
  journal={J. Reine Angew. Math.},
  volume={357},
  date={1985},
  pages={138--160},
  issn={0075-4102},
  review={\MR {783538 (86m:11070)}},
}

\bib{Ros02}{book}{
  author={Rosen, Michael},
  title={Number theory in function fields},
  series={Graduate Texts in Mathematics},
  volume={210},
  publisher={Springer-Verlag},
  place={New York},
  date={2002},
  pages={xii+358},
  isbn={0-387-95335-3},
  review={\MR {1876657 (2003d:11171)}},
}

\bib{RS94}{article}{
  author={Rubinstein, Michael},
  author={Sarnak, Peter},
  title={Chebyshev's bias},
  journal={Experiment. Math.},
  volume={3},
  date={1994},
  number={3},
  pages={173--197},
  issn={1058-6458},
  review={\MR {1329368 (96d:11099)}},
}

\bib{Sou09}{article}{
  author={Soundararajan, K.},
  title={Partial sums of the M\"obius function},
  journal={J. Reine Angew. Math.},
  volume={631},
  date={2009},
  pages={141--152},
  issn={0075-4102},
  review={\MR {2542220 (2010e:11090)}},
  doi={10.1515/CRELLE.2009.044},
}

\bib{Sze75}{book}{
  author={Szeg{\H {o}}, G{\'a}bor},
  title={Orthogonal polynomials},
  edition={4},
  note={American Mathematical Society, Colloquium Publications, Vol. XXIII},
  publisher={American Mathematical Society},
  place={Providence, R.I.},
  date={1975},
  pages={xiii+432},
  review={\MR {0372517 (51 \#8724)}},
}
\end{biblist} 
\end{bibdiv} 
\end{document}